\renewcommand*{\d}{\mathop{\kern0pt\mathrm{d}}\!{}}
\renewcommand{\div}{\operatorname{div}}
\newcommand{\dt}{\d t}
\newcommand{\dx}{\d x}
\newcommand{\dy}{\d y}
\newcommand{\cross}{\times}
\newcommand{\grad}{\nabla}
\newcommand{\La}{\Delta}
\newcommand{\nor}[1]{\left\| #1 \right\|}
\newcommand{\R}{\mathbb R}
\newcommand{\pthf}[2]{\left(\frac{#1}{#2}\right)}
\newcommand{\BMO}{\textit{BMO}}
\newcommand{\curl}{\operatorname{curl}}
\newcommand{\e}{\mathrm e}
\newcommand{\cM}{\mathcal M}
\newtheorem{theorem}{Theorem}[section]%
\newtheorem{proposition}[theorem]{Proposition}%
\newtheorem{lemma}[theorem]{Lemma}%
\newtheorem{corollary}[theorem]{Corollary}%
\theoremstyle{remark}%
\newtheorem{remark}{Remark}%
\theoremstyle{definition}%
\newtheorem{definition}[theorem]{Definition}%
\title{Conditional Liouville Theorems for the Navier--Stokes Equations}
\date{\today}
\author[1]{Matei P. Coiculescu}
\author[2]{Jincheng Yang}
\affil[1]{Department of Mathematics, Princeton University, Princeton, NJ 08544\newline {\it coiculescu@princeton.edu}}
\affil[2]{Department of Applied Mathematics and Statistics, Johns Hopkins University, Baltimore, MD 21211 {\it jincheng@jhu.edu}}
\begin{document}
\maketitle
\begin{abstract}
We present a novel approach to the Liouville problem for the stationary Navier--Stokes equations. As an application of our method, we prove conditional Liouville theorems with assumptions on the antiderivative of the velocity that represent substantial improvements on what was heretofore known.
\end{abstract}

\section{Introduction}

The Navier--Stokes equations are a model for the motion of viscous incompressible fluid. The viscosity of the fluid causes an energy dissipation that is typical of parabolic partial differential equations (like the heat equation). In this sense, we consider the Navier--Stokes equations as a gradient flow dissipating the energy of a fluid until it approaches a limiting end state, which we expect should be a stationary solution of the equations. Our quotidian observation of fluids further suggests that the end-state should be a fluid at rest, i.e. with Eulerian velocity $u\equiv 0$.  Rigorous mathematical proof of this expectation has so far been elusive, but work on the stationary Navier--Stokes equations remains an important part of the field of mathematical fluid mechanics. The present note thus concerns the stationary Navier--Stokes equations in $\R^3$:
\begin{align}
    \label{eqn: SNS}
    -\Delta u + u\cdot\nabla u + \nabla p =0,\\
    \textrm{div } u =0,
\end{align}
where $u$ denotes the velocity field of the fluid and $p$ denotes the scalar pressure field of the fluid. Weak solutions with $u\in \mathrm{L}^3_{loc}(\R^3)$ of Equation \eqref{eqn: SNS} are smooth, see Theorem X.1.1 in Galdi's monograph \cite{G}. For this reason, we may always assume that $u$ and $p$ are smooth functions on $\R^3$. We shall also assume that $u$ satisfies the natural vanishing condition at infinity:
\begin{align}
    \label{eqn: vanish}
    \lim _{|x| \to +\infty} u (x) = 0.
\end{align}
The question remains open, however, whether there exist any nontrivial solutions (i.e. solutions besides $u\equiv 0$). In particular, do there exist nontrivial solutions of Equation \eqref{eqn: SNS} with finite Dirichlet energy, i.e. with
\begin{align}
    \label{eqn: D}
    \int_{\R^3} |\nabla u|^2 \dx <\infty?
\end{align}
Solutions with finite Dirichlet energy and satisfying the vanishing condition Equation \eqref{eqn: vanish} are often called $D$-solutions, and the question whether nontrivial $D$-solutions exist is called the Liouville problem for the three-dimensional Navier--Stokes equations. We note here that for the stationary Navier--Stokes equations in $\R^n$ with $n\neq 3$, there are no nontrivial $D$-solutions.

We now survey some of the progress done towards resolving this question. Observe that the vanishing condition in Equation \eqref{eqn: vanish}, the assumption of finite Dirichlet energy as in Equation \eqref{eqn: D}, and the Sobolev embedding in $\R^3$ together imply that a $D$-solution $u$ lies in $\mathrm{L}^6(\R^3)$. Galdi proves in \cite{G} that if one assumes more decay at infinity, in particular if one assumes that $u\in \mathrm{L}^{9/2}(\R^3)$, then $u$ is trivial. Chae and Wolf prove a logarithmically improved version of the $\mathrm{L}^{9/2}(\R^3)$ criterion in \cite{CW}.

Another direction for resolving the Liouville problem seems to have begun with the work of Seregin, Silvestre, Sverak, and Zlatos in \cite{SSSZ}, where the authors consider linear elliptic and parabolic systems with a divergence-free drift term. In particular, they prove a Liouville theorem for the Laplace equation with drift when the drift term is divergence-free and in the scale-invariant space $\BMO^{-1}$. We recall that a divergence-free drift vector field $b_i$ can be written as the divergence of some tensor $T_{ij}$, and the assumption that $b\in \BMO^{-1}$ is equivalent to assuming that $T\in \BMO$.  The authors of \cite{SSSZ} suggest that proving deep properties, like a Liouville theorem for an elliptic system, requires scale-invariant information: one should not be able to break the scaling. The assumption that $u\in \BMO^{-1}$ lies exactly in this category of hypotheses for the stationary Navier--Stokes system as well. Indeed, Seregin proves in \cite{S1} that if a solution $u$ of Equation \eqref{eqn: SNS} is in $\mathrm{L}^6(\R^3)\cap \BMO^{-1}(\R^3)$, then $u$ is trivial (we note that the finite Dirichlet energy assumption is not directly used in Seregin's proof).  

Seregin proves the first Liouville-type result with a quantitative version of the $\BMO^{-1}$ condition in \cite{S2}. We denote balls of radius $R$ centered at the origin as $B_R(0)$ or also as $B_R$ when there is no confusion. Suppose $u= \textrm{curl } \psi$ is a smooth solution of Equation \eqref{eqn: SNS}. Seregin proves in \cite{S2} that with the assumption
\begin{align*}
     \left( \fint _{B _R} |\psi - (\psi) _{B _R}| ^s \dx \right) ^\frac1s \le C R ^{\alpha (s)},
\end{align*}
for some $s>3$ and constant $C>0$ with $\alpha(s) = \frac{s-3}{6(s-1)}$, the smooth solution $u$ of Equation \eqref{eqn: SNS} is trivial. In their article \cite{CW2}, Chae and Wolf prove that if $u = \div T$ for some tensor $T$ with 
\begin{align*}
    \left( \fint _{B _R} |T - (T) _{B _R}| ^s \dx \right) ^\frac1s \le C R ^{\alpha (s)}
\end{align*}
for some $s>3$ and constant $C>0$ with $\alpha (s) = \min \{\frac13 - \frac1s, \frac16\}$, then $u$ is trivial. The recent work \cite{BY} of Bang and Yang provides a logarithmic improvement of the Chae--Wolf result. Lastly, we recommend Section 6 in the survey article \cite{SS} as another short summary of what has been done in this direction for the Liouville problem. The references in \cite{SS} are exhaustive up to the year of its publication.

The goal of the present work is to present a novel approach to the Liouville problem of the stationary Navier--Stokes equations. The main difference to our approach, in comparison with previous methods of attack, is that we perform local estimates on domains called \textit{capsules} that we allow to vary in size from point to point, depending on the local Dirichlet energy and local average velocity. The main point is that if the velocity does not \textit{stretch} too much, we can prove a Liouville theorem. The best elucidation of our new approach will be in proving the following theorems:

\begin{theorem}
\label{FIRSTTHM}
    Suppose $u \in C ^\infty (\R^3)$ is a $D$-solution of the stationary Navier--Stokes equations. Let $s\geq 1$ be arbitrary. If $u = \curl \psi$ and satisfies
    \begin{align}
        \label{eqn: first-thm}
        \left(\fint _{B _R (x _0)} |\psi - (\psi) _{B _R (x _0)}|^s \dx\right)^{\frac{1}{s}} \le C R ^{\alpha}, \qquad \forall R > 1, \forall x _0 \in \R ^3
    \end{align}
    for some constants $C>0$ and $0<\alpha<1$, then $u \in \mathrm{L}^{p, \infty}$ for any $p > \frac4{1 - \alpha}$. Moreover, if $\alpha < \frac19$, then $u \equiv 0$.
\end{theorem}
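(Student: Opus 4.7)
I would first observe that the $u\equiv 0$ conclusion under $\alpha < 1/9$ follows from the weak-$L^p$ claim together with classical results. A smooth $D$-solution vanishing at infinity lies in $L^6(\mathbb R^3)$ by Sobolev embedding, and when $\alpha<1/9$ the exponent $4/(1-\alpha)$ is strictly less than $9/2$, so the first assertion of the theorem furnishes some $p > 9/2$ with $u \in L^{p,\infty}$. The distribution bound $|\{|u|>\lambda\}| \le C\min(\lambda^{-6},\lambda^{-p})$ and a layer-cake integration then give $u \in L^{9/2}(\mathbb R^3)$, whereupon Galdi's $L^{9/2}$ Liouville theorem \cite{G} forces $u \equiv 0$. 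Thus the whole task is to prove the weak-$L^p$ bound.

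\textbf{Caccioppoli-type estimate.} The plan for the weak-$L^p$ bound is to set up a local energy identity, transfer a derivative onto $\psi$ via $u=\curl\psi$, and feed in \eqref{eqn: first-thm}. Dotting \eqref{eqn: SNS} with $u\phi^2$ for a smooth cutoff $\phi$ supported in $B_{2R}(x_0)$ gives
\[
\int |\nabla u|^2 \phi^2 = -\int \nabla u \cdot u\,\nabla\phi^2 + \int\left(\tfrac12 |u|^2 + p\right) u \cdot \nabla\phi^2.
\]
The trilinear term is the only one sensitive to the $\psi$-hypothesis, and one rewrites it via
\[
\int |u|^2 u \cdot \nabla\phi^2 = -\int \big(\psi - (\psi)_{B_{2R}(x_0)}\big) \cdot \big(\nabla|u|^2 \times \nabla\phi^2\big),
\]
where the constant drops because $\curl(\bar\psi\,\nabla\phi^2)=0$. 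Hölder with exponents tuned to $s$ absorbs the $L^s$-norm of $\psi - \bar\psi$ into $R^{\alpha}$, at the cost of local norms of $u$ and $\nabla u$. The pressure term is handled in parallel, either by representing $p$ via Riesz transforms of $u\otimes u$, or by passing to the head pressure $\Pi=\tfrac12|u|^2+p$ which satisfies $\Delta\Pi = \div(u\times\curl u)$ and can again be bounded by the $\psi$-hypothesis after one integration by parts. A Young's inequality absorption then yields an algebraic growth bound on $\int_{B_R(x_0)}|\nabla u|^2$ depending on $\alpha$ and $s$.

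\textbf{Capsule refinement and pointwise decay.} To reach the sharp exponent $4/(1-\alpha)$, the novelty flagged in the introduction is to replace $B_R(x_0)$ by an anisotropic \emph{capsule} $\mathcal C(x_0)$ whose length and width depend on the local average velocity $\bar u(x_0)$ and the local Dirichlet energy. On such a capsule the cutoff gradient $|\nabla\phi|$ scales differently along and transverse to $\bar u(x_0)$, which is well matched to the anisotropy of the convective term $u\cdot\nabla u$; a judicious choice of aspect ratio balances advection against dissipation and improves the exponent in the local energy bound. The resulting decay $\fint_{\mathcal C(x_0)}|u|^2 \lesssim |x_0|^{-\gamma}$ for a suitable $\gamma$, coupled with interior regularity for smooth stationary Navier--Stokes solutions, upgrades to a pointwise estimate $|u(x)|\le C_\epsilon |x|^{-3(1-\alpha)/4+\epsilon}$ for every $\epsilon>0$, and hence $u\in L^{p,\infty}$ for every $p>4/(1-\alpha)$.

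\textbf{Main obstacle.} The hardest step, I expect, is the capsule analysis itself: the standard Bogovskii correctors and Sobolev inequalities that control the pressure and embed $L^2$-based norms into $L^q$ are sharp on balls but degrade under elongation by powers of the aspect ratio. The key technical task is to calibrate the capsule's length-to-width ratio so that this loss is exactly offset by the gain coming from $|\nabla\phi|$ being small in the long direction, producing the exponent $4/(1-\alpha)$ rather than some weaker exponent. A secondary challenge is that $\bar u(x_0)$ and the local Dirichlet energy, which define $\mathcal C(x_0)$, are the very quantities the estimate is controlling, so the whole scheme needs to be stable under bootstrap.
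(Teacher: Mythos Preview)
Your reduction has a sign slip: when $\alpha<1/9$ you need some $p$ strictly \emph{below} $9/2$ (not above), so that interpolating $L^{p,\infty}$ with $L^\infty$ (or $L^6$) lands in $L^{9/2}$ for Galdi's criterion.

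More seriously, the Caccioppoli scheme you outline---testing the velocity equation with $u\phi^2$, moving a curl onto $\psi$, and estimating the head pressure---is essentially the Seregin and Chae--Wolf route, which yields strictly weaker thresholds than $1/9$ and which the paper is expressly improving. The paper never runs such an energy identity. It works instead with the \emph{vorticity} equation (so the pressure is absent) and proves a pointwise oscillation bound $\|u - Ue_1\|_{L^\infty(\mathcal C)} \lesssim \varepsilon_0(UR/L + 1/R)$ on each capsule, via local $L^q$ estimates for a Poisson equation with constant drift. The capsule around $x$ is then chosen so that $U = R^{-1}(L/R)^{1+\sigma}$ and the averaged Dirichlet energy density satisfies $\Xi \approx \varepsilon_0(L/R)^\delta/(LR)$. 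The two mechanisms you are missing are: (i) a Vitali-type covering lemma for the capsules $\{\mathcal C_x\}$, which---because $\Xi(x)^2$ is an average of $|\nabla u|^2$ over $\mathcal C_x$---gives $\Xi\in L^{2,\infty}(\mathbb R^3)$ directly from $\nabla u\in L^2$; and (ii) the identity $\int_{B_R}(\psi-\bar\psi)\cdot(e\times y)\,dy \approx UR^5$, obtained by writing $e\times y=\curl(y_1y)$ and integrating by parts, which together with the hypothesis \eqref{eqn: first-thm} forces $(L/R)^{1+\sigma}\approx UR\lesssim R^\alpha$. From these one computes $|u|^p/\Xi^2\lesssim R^{\,4-p+\alpha(p+2(1-\delta)/(1+\sigma))}$, which is bounded for any $p>4/(1-\alpha)$ after sending $\sigma\to\infty$; then $|u|^p\lesssim\Xi^2\in L^{1,\infty}$ gives $u\in L^{p,\infty}$. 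Your proposed pointwise decay $|u(x)|\lesssim|x|^{-3(1-\alpha)/4+\epsilon}$ is neither proved nor needed---the weak-$L^p$ membership comes from the distribution function of $\Xi$, not from spatial decay of $u$.
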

\begin{theorem}
\label{SECONDTHM}
    Suppose $u\in C^\infty(\R^3)$ is a $D$-solution of the stationary Navier--Stokes equations. If $u$ satisfies:
    \begin{align}
        \label{eqn: second-thm}
        \int _{x _0} ^x u \cdot \d \ell \le C|x-x _0|^\beta, \qquad \forall x _0, x\in \R^3,
    \end{align}
    for some constants $C>0$ and $0<\beta<1$, then $u \in \mathrm{L}^{p, \infty}$ for any $p > \frac{4 - 34 \beta / 29}{1 - \beta}$. Moreover, if $\beta<\frac{29}{193}$, then $u \equiv 0$.
\end{theorem}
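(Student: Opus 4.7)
The plan is to extend the capsule-based method used in the proof of Theorem~\ref{FIRSTTHM} to the setting of hypothesis \eqref{eqn: second-thm}, which gives a direct H\"older bound on line integrals of $u$. First, I would derive the standard Caccioppoli-type identity by testing \eqref{eqn: SNS} against $u\phi^2$ for a smooth cutoff $\phi$ supported in a capsule $\mathcal{C}(x_0, L, r)$ of axial length $L$ and transverse radius $r$, obtaining
\begin{align*}
    \int_{\mathcal{C}} |\nabla u|^2 \phi^2 \,\dx = \frac12\int_{\mathcal{C}}|u|^2 \Delta(\phi^2)\,\dx + \int_{\mathcal{C}}\left(\frac{|u|^2}{2} + p\right)u\cdot\nabla(\phi^2)\,\dx.
\end{align*}
The local pressure is controlled through the representation $p = R_iR_j(u_iu_j)$, localised to a neighbourhood of $\mathcal{C}$ via a standard far-field/near-field split; this introduces the Dirichlet energy of $u$ on a slightly enlarged capsule plus a tail term that decays in $L$.

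The decisive step is the cubic term $\int |u|^2 u \cdot \nabla(\phi^2)$. I would align the axis of the capsule along a direction $e$ and split $u\cdot\nabla(\phi^2) = u_e\,\partial_e(\phi^2) + u_\perp\cdot\nabla_\perp(\phi^2)$. The first piece involves the axial derivative of a primitive of $u_e$ controlled by $L^\beta$ via \eqref{eqn: second-thm}; integrating by parts against this primitive transforms that contribution into an expression of the form $L^\beta\int \phi|\nabla\phi|\,|u|\,|\nabla u|$, which is manageable by Cauchy--Schwarz and the Sobolev embedding $\dot H^1(\mathcal{C})\hookrightarrow \mathrm{L}^6(\mathcal{C})$. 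The transverse piece $u_\perp\cdot\nabla_\perp(\phi^2)$ costs a factor of $r^{-1}$ and is absorbed through the same Sobolev argument. Optimising the capsule shape $L/r$ against the resulting Dirichlet energy bound is the origin of the curious rational $34/29$; solving the linear program in the exponents yields
\begin{align*}
    \int_{\mathcal{C}(x_0, L, r)} |\nabla u|^2 \,\dx \lesssim L^{\gamma(\beta)}
\end{align*}
for some explicit $\gamma(\beta)$.

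A Chebyshev/level-set argument, as in the proof of Theorem~\ref{FIRSTTHM}, converts this decay into $u \in \mathrm{L}^{p,\infty}$ for any $p \ge (4-34\beta/29)/(1-\beta)$. When $\beta < 29/193$ this threshold drops strictly below $9/2$, so by real interpolation between the weak-type bound and the $D$-solution integrability $u \in \mathrm{L}^6$ we obtain $u \in \mathrm{L}^{9/2}$, and Galdi's theorem then forces $u \equiv 0$. The main obstacle I anticipate is the integration by parts against the line-integral primitive: the primitive is defined only along straight segments, so one must execute a careful Fubini-type argument across the transverse slices of $\mathcal{C}$ while absorbing boundary contributions without spoiling the $L^\beta$ gain that is the whole point of the method.
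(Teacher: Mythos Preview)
Your proposal diverges from the paper's actual argument and has a genuine gap. The paper does \emph{not} test \eqref{eqn: SNS} against $u\phi^2$ or run a Caccioppoli scheme. Instead, the proof of \cref{SECONDTHM} relies entirely on the capsule machinery already built: \cref{prop: construction} attaches to each $x$ a capsule with parameters $R(x),L(x),U(x),\Xi(x)$ obeying the ansatz of Section~\ref{sec: ansatz}, and \cref{lem: oscillation-estimate} furnishes the pointwise bound $|u(x)|\lesssim L^{-1}(L/R)^{2+\sigma}$. The hypothesis \eqref{eqn: second-thm} enters only through \cref{LINEINTEGRAL}, which shows that the line integral of $u$ along the axis of a long capsule equals $\approx L(x)U(x)=(L/R)^{2+\sigma}$; combined with \eqref{eqn: second-thm} this gives $(L/R)^{2+\sigma}\lesssim L^\beta$. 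One then writes $|u|^p/\Xi^2$ as a pure power of $L$, invokes the covering lemma (\cref{COVERING}) to place $\Xi$ in $\mathrm{L}^{2,\infty}$, and solves a linear equation in $p$ with free parameters $\sigma,\delta$. The constant $34/29$ is not the output of a capsule-shape optimisation in an energy inequality: it comes from setting $\sigma=\delta=\tfrac5{12}$, the endpoint permitted by the ansatz \eqref{eqn: ansatz}.

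Your Caccioppoli route has two concrete problems. First, the transverse piece $\int|u|^2\,u_\perp\!\cdot\!\nabla_\perp(\phi^2)$ receives no help whatsoever from \eqref{eqn: second-thm}, since the hypothesis only controls the component of $u$ \emph{along} the segment of integration; bounding it by ``Sobolev and a factor $r^{-1}$'' reproduces the standard $\|u\|_{\mathrm L^3}^3$ scaling and does not close below $\mathrm L^{9/2}$. The paper circumvents this precisely because \cref{lem: oscillation-estimate} forces $u$ to be nearly parallel to the capsule axis (so $|u_\perp|\lesssim \varepsilon_0 UR/L$), a structural fact your scheme never establishes. Second, nothing in your outline explains how the integration-by-parts gain of $L^\beta$ and the transverse loss of $r^{-1}$ combine to produce the exponent $(4-34\beta/29)/(1-\beta)$; in the paper that threshold is dictated by the relations \eqref{eqn: equality-ULR-Xi} and the $\mathrm L^{2,\infty}$ control on $\Xi$, neither of which your argument invokes.
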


Our \cref{FIRSTTHM} improves the result of Seregin by allowing $\mathrm{L}^s$ control on the mean oscillation for $1 \leq s \leq 3$ as well. In addition, our hypothesis is weaker then Seregin's for $3 < s < 7$. Likewise, our \cref{FIRSTTHM} improves the Chae--Wolf result: our hypothesis is weaker than their hypothesis when $3< s< 9/2$. We also remark that our threshold $\alpha=1/9$ is uniform across the exponents. 

Our \cref{SECONDTHM} shows that $u\equiv 0$ when we control the line integral of $u$. The integral $\int _{x_0} ^{x }$ means integration on a straight line segment connecting $x_0$ and $x$. For $\beta < 1$, the assumption in Equation \eqref{eqn: second-thm} holds automatically if $|x - x _0|$ is small since $u$ is bounded. Therefore, Equation \eqref{eqn: second-thm} is also a type of growth condition.

We remark that the hypotheses of our results encode quantitative information about the antiderivative of solutions of the stationary Navier--Stokes equations, in line with the previous works outlined above. Indeed, if $u$ is a potential flow, then the left-hand side of \eqref{eqn: second-thm} defines its potential function $\phi (x)$ with $u = \nabla \phi$. However, in this scenario the triviality of $u$ follows directly from the Liouville theorem for harmonic functions: there are no non-constant entire harmonic functions with sublinear growth. The stream function $\psi$ can be defined by a similar line integral of \eqref{eqn: second-thm} with $-u ^\perp$ replacing $u$ for 2D incompressible flows. In this way, the stream function can also be understood as a type of antiderivative. 

The rest of this paper lies in setting up our \textit{capsule} approach and applying it to prove our theorems. We also prove a more general conditional Liouville theorem with a hypothesis on the capsules alone, quantifying the \textit{stretching} condition mentioned before.

In the next section, we prove local estimates for the Navier--Stokes system. In the following section, we use the \textit{capsules} to get global control on the shearing of the solution of Equation \eqref{eqn: SNS}. Then we prove some conditional Liouville theorems as consequences of the capsule method. We perform some technical estimates on the kernel of the Poisson equation with drift in the appendix. A short remark on notation: an inequality denoted with $\lesssim$ will always mean an inequality up to an absolute computable constant. For scalars or vectors $f$ and $g$, $f \approx g$ means $|f - g| \lesssim \min \{|f|, |g|\}$. Lastly, we denote the positive part of a real number $\alpha$ by $\alpha_+:= \max(0, \alpha)$.

\section{Local Estimates for the Navier--Stokes System}

We define a \textit{capsule} $\mathcal C$ of radius $R$, half-length $L \ge R$ in the direction $e \in \mathbb S ^2$, and center $x$ to be defined by 
\begin{align}
    \label{eqn: capsule}
    \mathcal C_{R,L,e}(x):= \bigcup _{t \in [-L + R, L - R]} (t e + B _R(x)).
\end{align}
Here $B _R(x) \subset \R^3$ is a ball of radius $R$ centered at $x$, and we shall abbreviate $B _R = B _R (0)$. Note that $\mathcal C_{R,R,e}(x)= B _R(x)$ for any $e$. Given a capsule $\mathcal{C}_{R,L,e}(x)$ and any $\lambda>0$, we denote $\lambda \mathcal C_{R,L,e}(x):= \mathcal{C}_{\lambda R, \lambda L, e}(x)$. Where there is no confusion, we may omit dependencies on $R,L,e,$ or $x$. Occasionally, we may denote the capsule by $\mathcal{C} _x$ to highlight the dependency on the center point $x$.

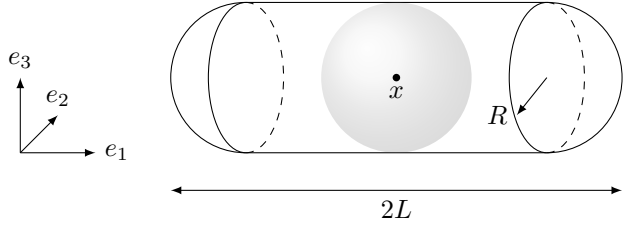
\begin{figure}
    \centering
    \begin{tikzpicture}
    \def\R{1} %
    \def\L{2} %

    \draw[] (-\L, \R) -- (\L, \R);
    \draw[] (-\L, -\R) -- (\L, -\R);

    \draw[] (-\L, \R) arc[start angle=90, end angle=270, radius=\R];
    \draw[] (\L, \R) arc[start angle=90, end angle=-90, radius=\R];
    \draw[thin] (-\L, \R) arc (90:270:\R/2 and \R);
    \draw[dashed] (-\L, -\R) arc (-90:90:\R/2 and \R);
    \draw[thin] (\L, \R) arc (90:270:\R/2 and \R);
    \draw[dashed] (\L, -\R) arc (-90:90:\R/2 and \R);
    \shade[ball color = gray!40, opacity = 0.2] (0,0) circle (\R);
    \fill (0, 0) circle (.05) node [anchor=north] {$x$};

    \draw[->, -latex] (\L, 0) -- (\L - \R/2 + .1, -\R/2) node [anchor=east] {$R$};

    \draw[->, -latex] (-\L-3, -1) -- (-\L-2, -1) node[right] {$e_1$};
    \draw[->, -latex] (-\L-3, -1) -- (-\L-2.5, -0.5) node[above] {$e_2$};
    \draw[->, -latex] (-\L-3, -1) -- (-\L-3, 0) node[above] {$e_3$};
    \draw[<->, latex-latex] (-\L-\R, -\R-.5) -- (\L+\R, -\R-.5) node[midway, anchor=north] {$2 L$};
    \end{tikzpicture}

    \caption{Illustration of the capsule $\mathcal C _{R,L,e _1} (x)$}
    \label{fig}
\end{figure}

Take $\mathcal C = \mathcal C _{R, L, e _1} (0)$ to be a capsule centered at the origin, pointing in the direction of $e _1$. In this section, suppose $u$ is a smooth solution to the stationary Navier--Stokes equations in a dilated capsule $2 \mathcal C$. %
We further assume there exist $U, \Xi > 0$, such that 
\begin{align}
    \label{eqn: U-and-Xi}
    \fint _{B _R} u \dx = U e _1, \qquad \fint _{2 \mathcal C} |\grad u| ^2 \dx = \Xi ^{2}.
\end{align}
We shall now derive a local $\mathrm{L}^\infty$ bound in $\mathcal C$ on $u - U e _1$. Let $V$ denote the volume of the capsule $\mathcal{C}$, or $V= |\mathcal{C}|$. Note that $V \approx L R ^2$. 

\begin{lemma}
    \label{lem: oscillation-estimate}
    If $u$ is a smooth solution to the stationary Navier--Stokes equations satisfying the assumptions in Equation \eqref{eqn: U-and-Xi}, then 
    \begin{align}
    \label{eqn: Linfty-bound}
        \nor{u - U e _1} _{\mathrm{L}^\infty (\mathcal C)} \lesssim \Xi R ^\frac12 V ^\frac12 \left(\frac {UR}L + \frac1{R} + L \Xi \right) (\Xi V ^\frac23 + 1) + L \Xi.
    \end{align}
\end{lemma}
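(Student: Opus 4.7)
Set $v := u - U e_1$. Since $U e_1$ is constant, $v$ is divergence-free with zero mean on $B_R$, and solves
\begin{align*}
    -\Delta v + U e_1 \cdot \nabla v + \nabla p = -\nabla \cdot (v \otimes v) \quad \text{in } 2\mathcal{C}.
\end{align*}
The plan is to bound $v$ pointwise on $\mathcal C$ via a representation formula using the fundamental solution $\Gamma$ of the scalar drift operator $-\Delta + U e_1 \cdot \nabla$, together with its tensor analog to absorb the pressure gradient; the required kernel estimates on $\Gamma$ are, as the introduction announces, developed in the appendix.

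First, I would control integral norms of $v$ on subregions of $2\mathcal C$. The zero-mean Poincar\'e inequality gives $\nor{v}_{\mathrm L^2(B_R)} \lesssim R \nor{\nabla u}_{\mathrm L^2(B_R)}$, and the axial identity $v(t e_1 + y) = v(y) + \int_0^t \partial_1 v(s e_1 + y)\,\d s$ together with Cauchy--Schwarz upgrades this to an $\mathrm L^2$ bound on the whole capsule of the form $\nor{v}_{\mathrm L^2(\mathcal C)} \lesssim L V^{1/2} \Xi$. The Poincar\'e--Sobolev embedding $W^{1,2} \hookrightarrow \mathrm L^6$ applied to $v - \fint_{2\mathcal C} v$ produces $\nor{v}_{\mathrm L^p(2\mathcal C)}$ bounds for every $p \in [2,6]$; in particular $\nor{v}_{\mathrm L^3(2\mathcal C)} \lesssim \Xi V^{2/3}$, which is precisely the origin of the $(\Xi V^{2/3} + 1)$ factor in the stated estimate. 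The pressure satisfies $-\Delta p = \partial_i \partial_j (v_i v_j)$, so localizing with a cutoff and invoking Calder\'on--Zygmund controls $p$, modulo a harmonic correction handled by its local average, by $\nor{v}_{\mathrm L^4(2\mathcal C)}^2$.

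Next I would choose a cutoff $\eta$ with $\eta \equiv 1$ on $\mathcal C$ and supported in $2\mathcal C$, with axial derivative $\lesssim 1/L$ and transverse derivatives $\lesssim 1/R$. For each $x_0 \in \mathcal C$, convolving the $\eta$-localized equation against $\Gamma(x_0 - \cdot)$ yields a representation of $v(x_0)$ as a sum of four contributions: a nonlinear term bounded by $\int |\Gamma| |v| |\nabla v|$ (after an integration by parts), a pressure-boundary term $\int |\Gamma| |p| |\nabla \eta|$, a drift-commutator term $U \int |\Gamma| |v| |\partial_1 \eta|$, and a Laplacian-commutator term $\int (|\Gamma| |\Delta \eta| + |\nabla \Gamma| |\nabla \eta|) |v|$. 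Each integral is organized to produce the common prefactor $\Xi R^{1/2} V^{1/2}$---obtained by pairing $\nor{\Gamma}_{\mathrm L^2}\sim R^{1/2}$ against $\nor{\nabla u}_{\mathrm L^2(2\mathcal C)} = \Xi V^{1/2}$---multiplied by one of the three rate factors $UR/L$ (drift commutator), $1/R$ (Laplacian commutator), or $L\Xi$ (nonlinear and pressure terms), with the correction $(\Xi V^{2/3} + 1)$ coming from a H\"older pairing against $\nor{v}_{\mathrm L^3(2\mathcal C)}$. The additive tail $L\Xi$ handles a low-regularity remainder via the direct line-integral bound $|v(x) - v(y)| \le \int_\gamma |\nabla u|\,\d\ell$ along a path of length $\lesssim L$ connecting $x\in\mathcal C$ to a point of $B_R$.

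The main obstacle is extracting sharp enough anisotropic decay of $\Gamma$ along the capsule axis---so that the kernel integrals against the thin cylindrical geometry are truly of order $R^{1/2}$ rather than $L^{1/2}$---and simultaneously taming the nonlinear self-boost so that it manifests only as the mild multiplicative correction $(\Xi V^{2/3}+1)$ rather than a true fixed-point feedback in $\mathrm L^\infty$. Once the kernel estimates are in hand (in the appendix), the remainder is careful bookkeeping of the scales $R$, $L$, $V$, $U$, and $\Xi$ through each of the four terms of the representation.
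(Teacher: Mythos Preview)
Your approach diverges from the paper's in two essential ways, and each carries a genuine gap.

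First, the paper does \emph{not} work on the velocity equation with pressure. It passes to the vorticity equation precisely to avoid the nonlocal pressure, and then recovers the velocity through a Biot--Savart decomposition $u - Ue_1 = v + h$ with $v = \curl(-\Delta)^{-1}(\phi\omega)$ defined on all of $\mathbb R^3$ and $h$ harmonic inside $\frac32\mathcal C$. This decomposition is the engine of the proof: (i) the \emph{global} Sobolev inequality gives $\|v\|_{\mathrm L^6(\mathbb R^3)} \lesssim \|\phi\omega\|_{\mathrm L^2} \lesssim \Xi V^{1/2}$ with a constant independent of the aspect ratio $L/R$; (ii) the harmonic part $h$ enjoys the mean-value property, so its values along the axis equal averages over translated balls, yielding $\|h\|_{\mathrm L^\infty(\frac43\mathcal C)} \lesssim L\Xi$. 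Your Poincar\'e--Sobolev claim $\|u-Ue_1\|_{\mathrm L^3(2\mathcal C)} \lesssim \Xi V^{2/3}$ fails on a thin capsule: testing with $f = x_1$ on $\mathcal C_{R,L,e_1}(0)$ shows the constant in $\|f - \bar f\|_{\mathrm L^6(\mathcal C)} \le C\|\nabla f\|_{\mathrm L^2(\mathcal C)}$ is of order $(L/R)^{2/3}$, not universal. The factor $\Xi V^{2/3}$ in the stated bound in fact originates from $\|\omega\|_{\mathrm L^{3/2}(\frac32\mathcal C)} \le \|\omega\|_{\mathrm L^2}V^{1/6}$, fed into a two-step bootstrap on the vorticity ($\mathrm L^{3/2}\to \mathrm L^3\to \mathrm L^6$) via the appendix lemma, followed by an explicit Biot--Savart integral giving $\|v_1\|_{\mathrm L^\infty}\lesssim R^{1/2}\|\omega\|_{\mathrm L^6}$.

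Second, your pressure handling does not close. Localizing $-\Delta p = \partial_i\partial_j(v_iv_j)$ and applying Calder\'on--Zygmund produces a Riesz-transform image of $v\otimes v$ \emph{plus a harmonic function} determined by the boundary data of $p$ on $\partial(2\mathcal C)$; that harmonic correction is not controlled by any local quantity you have available, so ``handled by its local average'' is not justified. Relatedly, the appendix supplies only the \emph{scalar} fundamental solution $\Gamma$ of $-\Delta + U\partial_1$, applied there to the pressure-free vorticity equation. To run your representation directly on the velocity you would need the full Oseen \emph{tensor} with its own anisotropic bounds, which the paper neither develops nor invokes.
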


The proof of this lemma will occupy the following three subsections.

\subsection{Control of the Average Velocity}
Denote 
\begin{align*}
    \bar u (t) := \fint _{B _R (t e _1)} u \dx.   
\end{align*}
Then $\bar u (0) = U e _1$. In this subsection we show that $|\bar u (t) - U e _1| \lesssim L \Xi$ provided that $|t| \le 2 (L - R)$. 

\begin{lemma}
    \label{lem: mean-velocity}
    For every $|t| \le 2 (L - R)$, we have  
    \begin{align*}
        |\bar u (t) - U e _1| \lesssim L \left(\fint _{\mathcal C _{2 R, 2 L, e _1}} |\nabla u| ^2 \d x\right) ^{\frac12}.
    \end{align*}
\end{lemma}

To do this, we require the following short lemma.
\begin{lemma}\label{SILLYLEMMA}
Let $f(x)$ be a nonnegative function. Then for every $l > R$, we have
$$\frac{\sqrt3 - 1}{2}R \int _{\mathcal C _{R / 2, l + R / 2, e _1}} f(x) \dx \le \int _{-l} ^{l} \int_{B _R} f(x+te_1) \dx \dt \le 2R \int_{\mathcal C _{R, l + R, e _1}} f(x)\dx.$$
\end{lemma}
\begin{proof}
We first rewrite:
\begin{equation}
    \label{BASICCHANGEVAR}
    \int _{-l} ^{l} \int_{B _R} f(x+te_1) \dx \dt = \int_{-l} ^{l} \int_{B _R (t e _1)} f(x) \dx \dt.
\end{equation}
Let $\mathbf1 _A (x)$ be the characteristic function of a set $A$. We rewrite
$$
    \int _{-l} ^{l}  \int_{B _R (t e _1)} f(x) \dx\dt = \int _{-l} ^{l} \int_{\R ^3} f(x) \mathbf1 _{B _R (t e _1)} (x) \dx\dt.
$$
Using the Fubini--Tonelli Theorem we have:
\begin{equation}\label{STEP1}
    \int _{-l} ^{l}  \int_{B _R (t e _1)} f(x) \dx\dt = \int_{\R ^3} f(x) \left(\int _{-l} ^{l} \mathbf1 _{B _R (t e_1)} (x) \dt\right)\dx.
\end{equation}

Now observe that $x\in B_R(te_1)$ implies $|x-te_1| < R$, 
or equivalently
$$(t-x_1)^2+x_2^2+x_3^2 < R^2,$$
where
$x= (x_1,x_2,x_3)$. We thus have that $t$ necessarily lies in the interval 
$$t\in \left(x_1-\sqrt{R^2-(x_2^2+x_3^2)}, x_1 + \sqrt{R^2-(x_2^2+x_3^2)}\right).$$
The integral of the characteristic function is thus equal to 
\begin{equation}\begin{aligned}
    \label{eqn: integral-indicator}
    \int _{-l} ^{l} \mathbf1 _{B_R+te_1}(x) \dt &= \max \bigg\{\min \left\{l, x _1 + \sqrt{R^2-(x_2^2+x_3^2)}\right\} \\
    & \qquad \qquad - \max\left\{-l, x_1-\sqrt{R^2-(x_2^2+x_3^2)}\right\}, 0\bigg\} \\
    & = \min \bigg\{
        2 \sqrt{R ^2 - (x _2 ^2 + x _3 ^2)}, \\
        & \qquad \qquad \left(\sqrt{R ^2 - (x _2 ^2 + x _3 ^2)} + l - |x _1|\right) _+
    \bigg\}.
\end{aligned}\end{equation}

Let us compute \eqref{eqn: integral-indicator}. On the one hand, if $x \in \mathcal C _{R / 2, l + R / 2, e _1}$, which is equivalent to $(|x _1| - l) _+ ^2 + x _2 ^2 + x _3 ^2 < (R / 2) ^2$, then
\begin{align*}
    \sqrt{R^2-(x_2^2+x_3^2)} > \sqrt{R ^2 - (R / 2) ^2} = \sqrt 3 R / 2.
\end{align*}
Therefore
\begin{align*}
    \int _{-l} ^{l} \mathbf1 _{B_R+te_1}(x) \dt \ge \min \left\{ \sqrt 3 R, \sqrt 3 R / 2 - R / 2 \right\} = \frac{\sqrt3-1}{2} R.
\end{align*}
This is true for any $x \in \mathcal C _{R/2, l + R/2}$, therefore we obtain a lower bound on the integral in Equation \eqref{eqn: integral-indicator}:
\begin{align*}
    \int _{-l} ^{l} \mathbf1 _{B_R+te_1}(x) \dt & \ge \frac{\sqrt3-1}{2} R \cdot \mathbf1 _{\mathcal C _{R/2, l + R/2}} (x).
\end{align*}
On the other hand, for the integral \eqref{eqn: integral-indicator} to be positive, one must have 
\begin{align*}
    \sqrt{R ^2 - (x _2 ^2 + x _3 ^2)} + l - |x _1| > 0,
\end{align*}
equivalent to $(|x _1| - l) _+ ^2 + x _2 ^2 + x _3 ^2 < R ^2$, or $x \in \mathcal C _{R, l + R, e _1}$. Moreover, in this situation, the integral is always smaller than 
\begin{align*}
    \int _{-l} ^{l} \mathbf1 _{B_R+te_1}(x) \d t \le 2 \sqrt{R^2-(x_2^2+x_3^2)} \le 2 R,
\end{align*}
so we obtain an upper bound on the integral in Equation \eqref{eqn: integral-indicator}:
\begin{align*}
    \int _{-l} ^{l} \mathbf1 _{B_R+te_1}(x) \d t \le 2 R \cdot \mathbf1 _{\mathcal C _{R, l + R, e _1}} (x).
\end{align*}
Combine the lower and upper bound, the conclusion of the lemma is immediate from Equation \eqref{STEP1}.
\end{proof}

Using this auxiliary Lemma, we now prove \cref{lem: mean-velocity}.

\begin{proof}[Proof of \cref{lem: mean-velocity}]
    For any $t _1, t _2 \in [-2 L + 2 R, 2 L - 2 R]$, we have, using Equation \eqref{BASICCHANGEVAR} and \cref{SILLYLEMMA} with $l = 2 L - 2 R$ that
    \begin{align}
        \notag
        |\bar u (t _2) - \bar u (t _1)| &= \left|\fint _{B _R} u (x + t _2 e _1) - u (x + t _1 e _1) \dx\right| \\
        \notag
        &= \left|\fint _{B _R} \int _{t _1} ^{t _2} \partial _{x _1} u (x + t e _1) \d t \dx\right| \\
        \notag
        &\le \int _{-2 L + 2 R} ^{2 L - 2 R} \fint _{B _R} |\partial _{x _1} u (x + t e _1)| \dx \d t \\
        \label{eqn: osc-mean-velocity}
        &\le \frac{2 R}{|B_R|}\int_{2 \mathcal{C}}|\nabla u|\dx \le \frac{16 RV}{|B_R|} \left(\fint_{2\mathcal{ C}}|\nabla u|^2 \dx\right)^{1/2}\lesssim L\Xi.
    \end{align}
    In particular, $|\bar u (t) - U e _1| = |\bar u (t) - \bar u (0)| \lesssim L \Xi$ for every $t \in [-2 L + 2 R, 2 L - 2 R]$.    
\end{proof}

\subsection{Velocity Decomposition}

In the next subsection, to avoid the typical difficulties that arise when analyzing the nonlocal pressure field, we shall work instead on the vorticity equation, which does not explicitly depend on the pressure. As a preparation, we now introduce the velocity decomposition.

Fix a non-negative cut-off function $\phi \in C _c ^\infty (2 \mathcal C)$ with $\phi \equiv 1$ in $\frac32 \mathcal C$ and satisfying the following pointwise bounds for some constant $C>0$: 
\begin{align}
        \label{eqn: grad-phi}
        |\partial _{x _1} \phi| & \le \frac CL, &
        |\grad \phi| & \le \frac CR, &
        |\grad ^2 \phi| &\le \frac C{R ^2}.
    \end{align}
    We decompose $u = U e _1 + v + h$, where 
\begin{align*}
    v = \curl (-\La) ^{-1} (\phi \omega)
\end{align*}
and $h = u - U e _1 - v$. Note that $v$ and $h$ are both divergence-free. The vector field $v$ represents the rotational part of the velocity because $\curl v = \omega$ in $\frac{3}{2}\mathcal{C}$. The vector field $h$ is the potential part of the velocity because a simple computation reveals that $h$ is harmonic in $\frac32 \mathcal C$: $\La h = \La u - \La v = - \curl \omega + \curl (\phi \omega) = -\curl ((1 - \phi) \omega)$, which is zero on the set $\{\phi = 1\}$.

Note that, since $\phi\omega$ is a bounded, smooth, compactly supported function, and since the Biot-Savart kernel decays at infinity, we have
\begin{align*}
    \lim_{|x|\to \infty} v(x) =0.
\end{align*}
Moreover, by the Sobolev embedding and the Calder\'{o}n--Zygmund estimate for singular integral operators, we have
\begin{align}
    \label{eqn: L6-H1-bound-v}
    \nor{v} _{\mathrm{L}^6 (\R ^3)} \lesssim \nor{\nabla v} _{\mathrm{L}^2 (\R ^3)} \lesssim \nor{\omega} _{\mathrm{L}^2 (2 \mathcal C)} \lesssim \Xi V ^\frac12.
\end{align}
By the same argument as in the previous subsection, we have
\begin{align*}
    |\bar v (t _2) - \bar v (t _1)| \lesssim  L \Xi, \qquad \forall t _1, t _2 \in [-2 L + 2 R, 2 L - 2 R].
\end{align*}
We have bounded the oscillation of $\bar v$. To get a bound for its size we use the facts that $V\lesssim LR^2$, $\|v\|_{\mathrm{L}^6(\R^3)}\lesssim \Xi V^{1/2}$, and \cref{SILLYLEMMA} to control the average:
\begin{align*}
    \left|
        \fint _{-2 L + 2 R} ^{2 L - 2 R} \bar v (t) \d t
    \right| \le \frac {R}{L |B _R|}
    \left|
        \int _{2\mathcal C} |v (x)| \dx
    \right| \lesssim V ^{-1} \nor{v} _{\mathrm{L}^6 (2 \mathcal C)} V ^\frac56 \le \Xi V ^\frac13.
\end{align*}
Since $L \ge R$, we know $V ^\frac13 \lesssim L$, so 
\begin{align*}
    |\bar v (t)| \lesssim  L \Xi + \Xi V ^\frac13 \lesssim L \Xi, \qquad \forall t \in [-2 L + 2 R, 2 L - 2 R].
\end{align*}
Since $|\bar u (t) - U e _1| \lesssim L \Xi$, we conclude 
\begin{align*}
    |\bar h (t)| \lesssim  L \Xi, \qquad \forall t \in [-2 L + 2 R, 2 L - 2 R].
\end{align*}
In other words, $\bar v$ and $\bar h$ both enjoy the same bound for $\bar u - U e _1$ that we obtained from the previous subsection.

Since $h$ is harmonic inside $\frac32 \mathcal C$, we have by the mean-value-property that $\overline{h}(t)= h(t e _1)$ for all applicable $t$, whence it follows that
\begin{equation}
\label{CENTERLINEBOUND}
    \sup_{t\in [-2 L + 2 R, 2L - 2R]} |h(t e_1)| \lesssim  L \Xi.
\end{equation}
Moreover, from Equation \eqref{eqn: L6-H1-bound-v} we also have 
\begin{align}
    \label{eqn: H1-h}
    \nor{\nabla h} _{\mathrm{L}^2 (2 \mathcal C)} \le \nor{\nabla u} _{\mathrm{L}^2 (2 \mathcal C)} + \nor{\nabla v} _{\mathrm{L}^2 (2 \mathcal C)} \lesssim \Xi V ^\frac12.
\end{align}
In the following lemma, we show that since $h$ is harmonic, we can in fact extend the same bound to the interior of $\frac{3}{2}\mathcal{C}$.
\begin{lemma}
\label{HARMONICBOUND}
Let $h$ be a harmonic function in a capsule $\frac32 \mathcal{C}$ satisfying Equations \eqref{CENTERLINEBOUND} and \eqref{eqn: H1-h}. Then $\|h\|_{\mathrm{L}^\infty(\frac43\mathcal{C})}\lesssim L\Xi$.
\end{lemma}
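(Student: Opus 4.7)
The plan is to combine the centerline estimate \eqref{CENTERLINEBOUND} with a pointwise bound on $\nabla h$ coming from \eqref{eqn: H1-h}, and then apply the fundamental theorem of calculus along a short segment from the axis of the capsule to the target point. The crucial observation is that since $h$ is harmonic on $\frac 32 \mathcal C$, each component of $\nabla h$ is itself harmonic there, so $\nabla h$ enjoys the mean value property on every ball compactly contained in $\frac 32 \mathcal C$.

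Because the two capsules are related by $\frac 32 \mathcal C = \frac 98 \left(\frac 43 \mathcal C\right)$, a short geometric check produces an absolute constant $c > 0$ such that $B_{cR}(y) \subset \frac 32 \mathcal C$ for every $y \in \frac 43 \mathcal C$ (concretely one can take $c = \tfrac 16$, using that the two capsule boundaries are at distance at least $\tfrac 16 R$ from one another). Combining the mean value property with the Cauchy--Schwarz inequality and the $\mathrm{L}^2$ bound \eqref{eqn: H1-h} I would get, for every such $y$,
\begin{align*}
    |\nabla h(y)| = \left|\fint_{B_{cR}(y)} \nabla h \, \d z\right| \le \left(\fint_{B_{cR}(y)} |\nabla h|^2 \, \d z\right)^{\frac 12} \lesssim \frac{\nor{\nabla h}_{\mathrm{L}^2(2 \mathcal C)}}{R^{3/2}} \lesssim \frac{\Xi V^{1/2}}{R^{3/2}} \lesssim \Xi \pthf{L}{R}^{\frac 12},
\end{align*}
after plugging in $V \approx L R^2$.

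Given any $x \in \frac 43 \mathcal C$, I would then pick $x^\ast = \tau e_1$ to be the closest point to $x$ on the axis segment $\{\tau e_1 : |\tau| \le \frac 43 (L - R)\}$. Since $\frac 43 (L-R) \le 2(L-R)$, the centerline bound \eqref{CENTERLINEBOUND} yields $|h(x^\ast)| \lesssim L \Xi$. A quick case analysis, depending on whether $x$ lies in the cylindrical part or in a spherical cap of $\frac 43 \mathcal C$, shows both that $|x - x^\ast| \le \frac 43 R$ and that the straight segment from $x^\ast$ to $x$ stays inside $\frac 43 \mathcal C$. Hence, by the fundamental theorem of calculus,
\begin{align*}
    |h(x) - h(x^\ast)| \le |x - x^\ast| \sup_{\frac 43 \mathcal C} |\nabla h| \lesssim R \cdot \Xi \pthf{L}{R}^{\frac 12} = \Xi (RL)^{\frac 12} \le L \Xi,
\end{align*}
where the last step uses the standing assumption $L \ge R$. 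Adding this to $|h(x^\ast)| \lesssim L \Xi$ completes the argument.

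I do not anticipate any serious obstacle: the proof is a clean application of harmonic function theory adapted to the capsule geometry, and the numerical choices $\tfrac 43$ and $\tfrac 32$ provide a fixed radial safety buffer. The only technical point worth writing down carefully is the geometric inclusion $B_{cR}(y) \subset \frac 32 \mathcal C$ for every $y \in \frac 43 \mathcal C$, which follows from the self-similar definition of capsules under dilation.
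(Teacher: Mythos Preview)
Your proposal is correct and follows essentially the same route as the paper: bound $\nabla h$ pointwise via the mean value property on balls of radius $\tfrac16 R$ inside $\tfrac32\mathcal C$, then integrate from the centerline and invoke \eqref{CENTERLINEBOUND}. The only cosmetic difference is that you apply Cauchy--Schwarz on the small ball (yielding $|\nabla h|\lesssim \Xi (L/R)^{1/2}$), whereas the paper passes through the $\mathrm{L}^1$ norm over all of $2\mathcal C$ (yielding the looser $|\nabla h|\lesssim \Xi L/R$); either bound suffices after multiplying by $|x-x^\ast|\lesssim R$.
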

\begin{proof}
Take $x \in B_{\frac43R}$, $|t| \le \frac43 (L - R)$, then $x + t e _1 \in \frac43 \mathcal C$.
By the mean-value property for $\nabla h$ in $\frac32 \mathcal C$, we have
$$\nabla h(x+te_1) = \fint_{B_{\frac16 R}} \nabla h(x+te_1 +y) \d y,$$
which immediately implies that
\begin{align*}|\nabla h(x+te_1)| \lesssim\frac{1}{|B_{\frac16 R}|} \int_{2\mathcal{C}} |\nabla h| \dx
\lesssim \frac{V}{|B_R|} \left(\fint_{2\mathcal{C}} |\nabla h|^2\right)^{1/2}\lesssim \frac{L}{R}\Xi.
\end{align*}
This is true for any $t, x$, so $h$ is $\frac{C L \Xi}R$-Lipschitz in $\frac43 \mathcal C$.
We now have
$$|h(x+te_1) - h(te_1)|\lesssim |x| \frac{L}{R}\Xi \lesssim L\Xi.$$
Together with Equation \eqref{CENTERLINEBOUND}, we know $|h (x + t e _1)| \lesssim L \Xi$. Since $t$ and $x$ were arbitrary, the claim is proven.
\end{proof}
Since $h$ is bounded in $\frac43 \mathcal{C}$ with bound given by $L\Xi$, we certainly have 
\begin{align*}
    \nor{h} _{\mathrm{L}^6 (\frac32 \mathcal C)} \lesssim L \Xi V ^\frac16 + \Xi V ^\frac12 \lesssim L \Xi V ^\frac16.
\end{align*}
Here we used $V ^\frac13 \lesssim L$. Combined with the $\mathrm{L}^6$ bound on $v$ in Equation \eqref{eqn: L6-H1-bound-v}, we have obtained an $\mathrm{L}^6$ bound for $\zeta := u - U e _1 = v + h$: 
\begin{align}
\label{eqn: L6ESTIMATE}
    \nor{\zeta} _{\mathrm{L}^6 (\frac32 \mathcal C)} \lesssim  L \Xi V ^\frac16 + \Xi V ^\frac12 \lesssim  L \Xi V ^\frac16.
\end{align}

\subsection{The Vorticity Equation}

The vector field $\zeta = u - U e _1$ satisfies the stationary Navier--Stokes equations with constant background drift $b = U e _1$:
\begin{align}
\label{eqn: EQNDRIFT}
    b \cdot \grad \zeta + \zeta \cdot \grad \zeta + \grad p = \La \zeta.
\end{align}
We shall use the Bernoulli head pressure (in the moving frame) $H = \frac{|\zeta|^2}2 + p$ and write Equation \eqref{eqn: EQNDRIFT} in the following form using vorticity $\omega = \curl u = \curl \zeta$:
\begin{align}
\label{eqn: EQNDRIFT1}
    b \cdot \grad \zeta + \omega \cross \zeta + \grad H = \La \zeta.
\end{align}
Taking the curl of Equation \eqref{eqn: EQNDRIFT1} yields
\begin{align*}
    b \cdot \grad \omega + \curl (\omega \cross \zeta) = \La \omega. 
\end{align*}
We use the local estimates proven for the Poisson equation in \cref{lem: local-estimate} of \cref{app} (we recognize that $\textrm{curl}(\omega \times \zeta) = \textrm{div } T$, for the tensor $T_{ij}= \epsilon_{ijk}(\omega\times\zeta)_k$ and where $\epsilon_{ijk}$ is the antisymmetric symbol). We then have for all $1<q<3$ and $1/r=1/q-1/3$:
\begin{align}
    \nor{\omega} _{\mathrm{L}^r (\frac43\mathcal C)} &\lesssim 
        \nor{\omega \otimes \zeta} _{\mathrm{L}^q (\frac32 \mathcal C)} + \left(\frac {UR}L + \frac1R\right) \nor {\omega} _{\mathrm{L}^q (\frac32 \mathcal C)}.
    \label{eqn: APPLYOSEEN}
\end{align}
However, we already know that (also using Equation \eqref{eqn: L6ESTIMATE})
\begin{align*}
    \nor{\omega} _{\mathrm{L}^\frac32 (\frac32 \mathcal C)} &\le \Xi V ^\frac23, &
    \nor{\omega \otimes \zeta} _{\mathrm{L}^\frac32 (\frac32 \mathcal C)} &\le \nor{\omega} _{\mathrm{L}^2 (\frac32 \mathcal C)} \nor{\zeta} _{\mathrm{L}^6 (\frac32 \mathcal C)} \le L \Xi ^2 V ^\frac23 .
\end{align*}
Setting $q=\frac32$ in the estimate from Equation \eqref{eqn: APPLYOSEEN}, we get
\begin{align*}
    \nor{\omega} _{\mathrm{L}^3 (\frac43 \mathcal C)} \lesssim \left(\frac {UR}L + \frac1{R} + L \Xi \right) \Xi V ^\frac23.
\end{align*}

Vorticity in $\mathrm{L}^3$ only guarantees that the velocity lies in $\BMO$, not $\mathrm{L}^\infty$. To get an $\mathrm{L}^\infty$ bound for $\zeta$, we therefore need to refine our estimates. We first note that 
\begin{align*}
    \nor{\omega \otimes \zeta} _{\mathrm{L}^2 (\frac43 \mathcal C)} &\le \nor{\omega \otimes v} _{\mathrm{L}^2 (\frac43 \mathcal C)} + \nor{\omega \otimes h} _{\mathrm{L}^2 (\frac43 \mathcal C)} \\
    & \le \nor{v} _{\mathrm{L}^6 (\frac43 \mathcal C)} \nor{\omega} _{\mathrm{L}^3 (\frac43 \mathcal C)} + \nor{\omega} _{\mathrm{L}^2 (\frac43 \mathcal C)} \nor{h} _{\mathrm{L}^\infty (\frac43 \mathcal C)} \\
    & \lesssim \Xi V ^\frac12 \left[ 
        \left(\frac {UR}L + \frac1{R} + L \Xi \right) \Xi V ^\frac23 + L \Xi
    \right].
\end{align*}
Here we used Equation \eqref{eqn: L6-H1-bound-v} and \cref{HARMONICBOUND}. Setting $q = 2$ in the estimate from Equation \eqref{eqn: APPLYOSEEN} and recalling that $\nor{\omega} _{\mathrm{L}^2} \lesssim \Xi V ^\frac12$, we get 
\begin{align*}
    \nor{\omega} _{\mathrm{L}^6 (\frac54 \mathcal C)} & \lesssim \Xi V ^\frac12 \left[ 
        \left(\frac {UR}L + \frac1{R} + L \Xi \right) \Xi V ^\frac23 + L \Xi
    \right] + \left(\frac {UR}L + \frac1{R}\right) \Xi V ^\frac12 \\
    & = \Xi V ^\frac12 \left(\frac {UR}L + \frac1{R} + L \Xi \right) (\Xi V ^\frac23 + 1). 
\end{align*}
Now, if we fix another cut-off function $\phi _1$ supported in $\frac54 \mathcal C$ and $\phi _1 \equiv 1$ in $\frac65 \mathcal C$ with derivative bounds as in Equation \eqref{eqn: grad-phi}, then $v _1 := \curl (-\Delta) ^{-1} (\phi _1 \omega)$ satisfies
\begin{align*}
    |v _1 (x)| = C \left|
        \int _{\R ^3} \frac{(\phi _1 \omega) (x - y) \times y}{|y| ^3} \d y
    \right| &\le C \int _{3 \mathcal C} |(\phi _1 \omega) (x - y)| |y| ^{-2} \d y \\
    & \lesssim \nor{\phi _1 \omega} _{\mathrm{L}^6 (\R ^3)} \left(
        \int _{3 \mathcal C} \frac1{|y| ^\frac{12}5} \d y
    \right) ^\frac56
\end{align*}
for every $x \in \R ^3$, where we may additionally bound
\begin{align*}
    \int _{3 \mathcal C} \frac1{|y| ^\frac{12}5} \d y &\le \int _{-R} ^R \int _{-R} ^R \int _{\R} \frac1{(y _1 ^2 + y _2 ^2 + y _3 ^2) ^\frac65} \d y _1 \d y _2 \d y _3 \\
    &= C \int _{-R} ^R \int _{-R} ^R \frac1{(y _2 ^2 + y _3 ^2) ^{\frac7{10}}} \d y _2 \d y _3 = C R ^\frac35.
\end{align*}
Therefore 
\begin{align*}
    \nor{v _1} _{\mathrm{L}^\infty} \lesssim \Xi R ^\frac12 V ^\frac12 \left(\frac {UR}L + \frac1{R} + L \Xi \right) (\Xi V ^\frac23 + 1).
\end{align*}
Similarly, we can define $h _1 = \zeta - v _1$, which is harmonic, and use our previous argument to get $\nor{h _1} _{\mathrm{L}^\infty (\mathcal C)} \lesssim L \Xi$, so 
\begin{align*}
    \nor{\zeta} _{\mathrm{L}^\infty (\mathcal C)} \lesssim \Xi R ^\frac12 V ^\frac12 \left(\frac {UR}L + \frac1{R} + L \Xi \right) (\Xi V ^\frac23 + 1) + L \Xi.
\end{align*}
This completes the proof of \cref{lem: oscillation-estimate}.
\qed

\subsection{Ansatz}
\label{sec: ansatz}

Suppose that for some $\varepsilon _0 > 0$, $0 \le \delta \le \frac5{12}$ to be determined later, the following holds:
\begin{align}
    \label{eqn: ansatz}
    L \Xi \le \frac{\varepsilon _0}{R} \pthf LR ^\delta \lesssim \varepsilon _0 \left(\frac {UR}L + \frac1{R}\right).
\end{align}
With such an ansatz, we have
\begin{align}
    \notag
    \nor{u - U e _1} _{\mathrm{L}^\infty (\mathcal C)} &\lesssim \varepsilon _0 \pthf LR ^{\delta - \frac12} \left(\frac {UR}L + \frac1{R} \right) \pthf LR ^{(\delta - \frac13) _+} + \varepsilon _0 \left(\frac {UR}L + \frac1{R}\right) \\
    \label{eqn: linfty-estimate}
    & \lesssim \varepsilon _0 \left(\frac {UR}L + \frac1{R} \right).
\end{align}
This is the ansatz with which we can prove Liouville theorems in the following sections.

\section{Capsules and a Covering Lemma}

We introduce a way of putting a capsule satisfying the ansatz around every point in $\R^3$, and we prove a covering lemma for this collection of capsules. To begin with, recall that \cref{lem: oscillation-estimate} controls the oscillation of $u$ in $\mathcal C$ but it uses information of $|\grad u| ^2$ in $2 \mathcal C$, which is not desirable. In the first subsection, we introduce the maximal functions, with which we can control $|\grad u| ^2$ in $2 \mathcal C$ using only the information in $\mathcal C$.

\subsection{Maximal Functions}

Recall the definition of the maximal function $\cM f$ of any $f \in \mathrm L ^1 _{\textrm{loc}} (\R ^3)$:
\begin{align*}
    \cM f (x) = \sup _{r > 0} \fint _{B _r (x)} |f (y)| \dy.
\end{align*}
We also introduce a streamwise maximal function $\cM _\Phi f$ associated with the flow map $\Phi$ of an incompressible vector field.

\begin{definition}
    Let $\Phi _s: \R ^3 \to \R ^3$ with $s \in \R$ be the flow map associated with the divergence free vector field $u$. In particular, $\Phi$ is given by the solution to the following initial value problem:
    \begin{align*}
        \frac\partial{\partial s} \Phi _s (x) &= u (\Phi _s (x)), & \Phi _0 (x) = x.
    \end{align*}
    For any continuous function $f \in C (\R ^3)$, we define the streamwise maximal function $\cM _\Phi f: \R ^3 \to \R$ by
    \begin{align}
        (\cM _\Phi f) (x) := \sup_{s > 0} \, \frac{1}{2 s} \int_{-s} ^s |(f \circ \Phi _\tau) (x)| \d \tau.
    \end{align}
\end{definition}

By continuity, $\cM _\Phi f \ge f$ in the pointwise sense. In analogy with the classical maximal function, $\cM _\Phi$ is also an operator of strong $(p, p)$-type for $p > 1$.

\begin{lemma}
    If $f \in C (\R ^3) \cap L ^p (\R ^3)$ with $1 < p < \infty$, then 
    \begin{align*}
        \| \cM _\Phi f \| _{L ^p (\R ^3)} \lesssim \| f \| _{L ^p (\R ^3)}.
    \end{align*}
\end{lemma}

\begin{proof} 
    Without loss of generality, assume $f \ge 0$. For $T > 0$, denote 
    \begin{align*}
        (\cM _{\Phi, T} f) (x) := \sup_{0 < s < T} \, \frac{1}{2 s} \int_{-s} ^s |(f \circ \Phi _\tau) (x)| \d \tau.
    \end{align*}
    Since $u$ is incompressible, $\Phi _s$ is a measure preserving map. Then for any $s' \in [-T, T]$ we have
    \begin{align*}
        \| \cM _{\Phi, T} f \| _{L ^p (\R ^3)} ^p & = \int_{\R^3} \left( \sup_{0<s<T} \frac{1}{2 s} \int_{-s}^{s} f \circ \Phi_\tau \d \tau \right)^p \dx \\
        & = \int_{\R^3} \left( \sup_{0<s<T} \frac{1}{2s} \int_{-s}^{s} f \circ \Phi_{\tau + s'} \d\tau \right)^p \dx \\
        & = \int_{\R^3} \left( \sup_{0<s<T} \frac{1}{2s} \int_{s' - s} ^{s' + s} f \circ \Phi_{\tau} \d\tau \right)^p \dx \\
        & = \int_{\R^3} \left( \sup_{0<s<T} \frac{1}{2s} \int_{s' - s} ^{s' + s} F_T(\tau,x) \d\tau \right)^p \dx,
    \end{align*}
    where we have denoted $F _T (\tau, x) := (f \circ \Phi _\tau (x))\cdot \mathbf1 _{(-2T,2T)} (\tau)$. The previous equality holds for any $s' \in [-T, T]$, so we may take an average in $s'$:
    \begin{align*}
        \| \cM _{\Phi, T} f \| _{L ^p (\R ^3)} ^p & = \frac1{2T} \int _{-T} ^T \int_{\R^3} \left( \sup_{0<s<T} \frac{1}{2s} \int_{s' - s} ^{s' + s} F _T (\tau, x) \d\tau \right)^p \dx \d s' \\
        & \le \frac1{2T} \int_{\R^3} \int _{-\infty} ^\infty \left( \sup_{s > 0} \frac{1}{2s} \int_{s'-s}^{s'+s} F_T(\tau,x) \d\tau \right)^p \d s' \dx.
    \end{align*}
    We know that the classical one-dimensional maximal function maps $L ^p$ to $L ^p$, so we get 
    \begin{align*}
        \int _{-\infty} ^\infty \left( \sup_{s > 0} \frac{1}{2s} \int_{s'-s}^{s'+s} F_T(\tau,x) \d\tau \right)^p \d s' & \le  C \int _{-\infty}^{\infty} F _T (\tau, x) ^p \d \tau \\
        & = C \int _{-2 T}^{2 T} F _T (\tau, x) ^p \d \tau.
    \end{align*}
    In the second step, we used that $F _T(\tau,x)$ is supported in $\{-2 T \le \tau \le 2 T\}$. Once again using the fact that $\Phi _t$ is measure preserving, we have
    \begin{align*}
    \| \cM _{\Phi, T} f \| _{L ^p (\R ^3)} ^p &\le \frac{C}{2T} \int_{\R^3} \int_{-2T}^{2T} F_T(\tau,x) ^p \d\tau \dx \\
    &= \frac{C}{2T} \int_{-2T}^{2T} \int_{\R^3} (f \circ \Phi_\tau)^p \dx \d\tau \\
    &= C \| f \|_{L^p(\mathbb{R}^3)}^p \cdot \frac{1}{2T} \int_{-2T}^{2T} d\tau \\
    &\le 2C \| f \|_{L^p(\mathbb{R}^3)}^p.
    \end{align*}
    Since $\| \cM _{\Phi, T} f \| _{L ^p} \le C \| f \| _{L ^p}$ for a constant $C$ independent of $T$, we may use the monotone convergence theorem to conclude $\| \cM _{\Phi} f \| _{L ^p} \le C \| f \| _{L ^p}$ as well.
\end{proof}

Recall that $u$ is a smooth vector field, so $|\grad u| ^2$ is globally Lipschitz continuous. Its maximal function
\begin{align*}
    \cM (|\grad u| ^2) (x) = \sup _{r > 0} \fint _{B _r (x)} |\grad u (x')| ^2 \dx' = \sup _{r > 0} \frac1{|B _r|} (\mathbf1 _{B _r} * |\grad u| ^2) (x),
\end{align*}
is also Lipschitz continuous with the same Lipschitz bound. Therefore, the composition of maximal functions $\cM _\Phi [\cM (|\grad u| ^2)]$ in the following lemma is well-defined.

\begin{lemma}
    \label{lem: maximal-controls-2C}
    Given $\mathcal C = \mathcal C _{R, L, e _1} (0)$, assume there exist $U, \tilde \Xi > 0$, such that 
    \begin{align}
        \label{eqn: U-and-Xi-3}
        \fint _{B _R} u \dx = U e _1, \qquad \fint _{\mathcal C} \cM _\Phi [\cM (|\grad u| ^2)] \dx = \tilde \Xi ^2.
    \end{align}
    There exist absolute constants $C > 0$ and $\varepsilon_1>0$ such that if either $L \le 2 R$ or $L \tilde \Xi \le \varepsilon _1 \frac{R}{L} \cdot U$, then 
    \begin{align}
        \label{eqn: bound-dirichlet-by-maximal}
        \fint _{2 \mathcal C} |\grad u| ^2 \dx \le C \fint _{\mathcal C} \cM _\Phi [\cM (|\grad u| ^2)] \dx = C \tilde \Xi ^2.
    \end{align}
\end{lemma}

\begin{proof}
    If $L \le 2 R$, we do not require use of the streamwise maximal function. Indeed, since $B _{R} (x) \subset 2 \mathcal C \subset B _{6 R} (x)$ for any $x \in \mathcal C$, we have
    \begin{align*}
        \fint _{2 \mathcal C} |\grad u| ^2 \dx &\lesssim \fint _{B _{6 R} (x)} |\grad u| ^2 \dx \le \cM (|\grad u| ^2) (x)  \le \cM _\Phi [\cM (|\grad u| ^2)] (x).
    \end{align*}
    As this holds for any $x \in \mathcal C$, we can take average of the right-hand side and conclude \eqref{eqn: bound-dirichlet-by-maximal}.
    
    From now on, we may assume $L > 2 R$. Applying \cref{SILLYLEMMA} twice, we have 
    \begin{align*}
        \int _{\mathcal C _{2R, L+R}} |\grad u| ^2 \dx &\le \frac CR \int _{-L + R} ^{L - R} \int _{B _{4 R}} |\grad u (x + t e _1)| ^2 \dx \dt \\
        &\le \frac CR \int _{-L + R} ^{L - R} \int _{B _{R}} \cM (|\grad u| ^2) (x + t e _1) \dx \dt \\ 
        & \le C \int _{\mathcal C _{R, L}} \cM (|\grad u| ^2) \dx
        \le C \int _{\mathcal C _{R, L}} \cM _\Phi [\cM (|\grad u| ^2)] \dx.
    \end{align*}
    Since the volume of $\mathcal C _{2R, L+R}$ is comparable to $\mathcal C _{R, L}$, we conclude 
    \begin{align*}
        \fint _{\mathcal C _{2R, L+R}} |\grad u| ^2 \dx &\le C \tilde \Xi ^2.
    \end{align*}
    The length $L+R$ is smaller than the desired $2 L$ length of $2 \mathcal C$ in Equation \eqref{eqn: bound-dirichlet-by-maximal}. To improve it, we use a bootstrap argument. 

    \paragraph{Step 1: The Dirichlet Integral Controls the Proximity of Streamlines}

    We claim that for any $L' \in [L + R, 2 L]$, if 
    \begin{align}
        \label{eqn: H1-bound-by-maximal}
        \fint _{\mathcal C _{2R, L'}} |\grad u| ^2 \dx \le C \tilde \Xi ^{2}, 
    \end{align}
    then 
    \begin{align}
        \label{eqn: sufficient-intersection}
        |\Phi _{\frac{t _1 - t _0}U} (B _R (t _0 e _1)) \cap B _R (t _1 e _1)| \ge \mu |B _R|, \qquad \forall |t _0|, |t _1| \le L' - 2 R
    \end{align}
    with $\mu = 1 - C \varepsilon _1$. 
    Indeed, the measure on the left-hand side of \eqref{eqn: sufficient-intersection} is given by 
    $$\int_{B_R(t _1 e_1)} \rho \left(\frac {t _1}U, x\right) \dx,$$
    where $\rho (s, x)$ is the solution to the following transport equation:
    $$\partial_s \rho + \textrm{div}(\rho u) =0, \quad \rho \left(\frac {t _0}U, x\right) = \mathbf{1}_{B_R(t _0 e _1)} (x).$$
    Note that $|\rho|\leq 1$ everywhere in space and time. We observe that
    \begin{align*}
        \frac{\d}{\d s} \int_{B_R(s U e_1)} \rho(s, x) \dx 
        &= \frac{\d}{\d s} \int_{B_R(0)} \rho(s, x+s Ue_1)\dx \\ 
        &=\int_{B_R(s Ue_1)} \partial_s \rho(s, x)\dx +\int_{B_R(s Ue_1)} U\partial_{x_1}\rho(s, x)\dx \\
        &= \int_{B_R(s Ue_1)}\textrm{div}(\rho(U e _1 - u))\dx \\
        &= \int_{\partial B_R(s Ue_1)}\rho(U e _1-u)\cdot n \d\sigma.
    \end{align*}
    Here $|\rho| \le 1$ and $n$ is the unit outward normal vector. By the Trace Theorem, we can estimate:
    \begin{align}
        \label{eqn: diff-ineq}
        \left|\frac{\d}{\d s}\int_{B_R(s Ue_1)} \rho(s, x)\dx \right| &\lesssim \int_{B_R(s Ue_1)} |\nabla u| \dx  + |\partial B _R| \cdot |U e _1 - \bar u (s U)|.
    \end{align}
    By \eqref{eqn: H1-bound-by-maximal}, we can apply \cref{lem: mean-velocity} to $\mathcal C _{2R, L'}$ instead of $\mathcal C _{2R, 2L}$ and deduce for all $|s U| \le L' - 2 R$ that 
    \begin{align*}
        |U e _1 - \bar u (s U)| \lesssim C L \tilde \Xi.
    \end{align*}
    
    Without loss of generality, assume $t _0 < t _1$. Integrate Equation \eqref{eqn: diff-ineq} from $s = \frac {t _0}U$ to $s = \frac{t _1} U$, we get 
    \begin{align*}
        \int_{B_R(t _1 e_1)} \rho \left(\frac {t _1}U, x\right) \dx \ge \int_{B_R(t _0 e_1)} \rho \left(\frac {t _0}U, x\right) \dx - C \int _{\frac {t _0}U} ^\frac {t _1}U \int_{B_R(s Ue_1)} |\nabla u| \dx \d s \\
        - |\partial B _R| \cdot C L \tilde \Xi \cdot \frac{t _1 - t_0}{U}.
    \end{align*}
    For the first term on the right, using the initial condition of $\rho$ we get 
    \begin{align*}
        \int_{B_R(t _0 e_1)} \rho \left(\frac {t}U, x\right) \dx = |B _R|.
    \end{align*}
    The second term is estimated using \cref{SILLYLEMMA}:
    \begin{align*}
        \int _{\frac {t _0}U} ^\frac {t _1}U \int_{B_R(s Ue_1)} |\nabla u| \dx \d s &= \frac1U \int _{t _0} ^{t _1} \int_{B_R (t e_1)} |\nabla u| \dx \dt \\
        &\le \frac {2R^3 L}U \left(\fint _{\mathcal C _{R, L'}} |\grad u| ^2  \dx\right)^{1/2} \le \frac{2 R ^3}{U} L \tilde \Xi.
    \end{align*}
    The third term is bounded by 
    \begin{align*}
        |\partial B _R| \cdot C L \tilde \Xi \cdot \frac{t _1 - t _0}{U} \lesssim R ^2 L \tilde \Xi \frac{L'}{U} \lesssim \frac{R ^2 L}{U} L \tilde \Xi.
    \end{align*}
    By assumption, $L \tilde \Xi \lesssim \varepsilon _1 \frac{R}{L} \cdot U$, so 
    \begin{align*}
        \int_{B_R(t _1 e_1)} \rho \left(\frac {t _1}U, x\right) \dx \ge |B _R| - \frac{C R ^3}{U} L \tilde \Xi - \frac{C R ^2 L}{U} L \tilde \Xi \ge |B _R| - C \varepsilon _1 R ^3.
    \end{align*}
    This completes the proof for Equation \eqref{eqn: sufficient-intersection}.

    \paragraph{Step 2: The Proximity of Streamlines controls the Dirichlet Integral}
    
    We claim if \eqref{eqn: sufficient-intersection} holds at $\mu = \frac12$, then 
    \begin{align}
        \label{eqn: H1-bound-by-maximal-2}
        \fint _{\mathcal C _{2 R, L ' + 2 R}} |\grad u| ^2 \dx \le C \tilde \Xi ^{2}
    \end{align}
    So we can improve \eqref{eqn: H1-bound-by-maximal} from length $L'$ to a longer length $L' + 2 R$. 
    To show \eqref{eqn: H1-bound-by-maximal-2}, we use \cref{SILLYLEMMA} and the definition of $\cM _\Phi$:
    \begin{align*}
        \int _{\mathcal C _{R, L, e _1}} \cM _\Phi [\cM (|\grad u| ^2)] \dx &\ge \frac1R \int _{-L + R} ^{L - R} \int _{B _R} \cM _\Phi [\cM (|\grad u| ^2)] (x + t e _1) \dx \dt \\
        &= \frac1R \int _{-L + R} ^{L - R} \int _{B _R (t e _1)} \cM _\Phi [\cM (|\grad u| ^2)] \dx \dt \\
        &\ge \frac1R \int _{-L + R} ^{L - R} \int _{B _R (t e _1)} \fint _{-\frac{4 L}U} ^{\frac{4 L}U} \cM (|\grad u| ^2) \circ \Phi _s \d s \dx \dt \\
        &\gtrsim \frac 1{RL} \int _{-L + R} ^{L - R} \int _{-4 L} ^{4 L} \int _{\Phi _\frac{t'}{U} (B _R (t e _1))} \cM (|\grad u| ^2) \dx \dt'  \dt.
    \end{align*}
    For $t' \in (-4 L, 4 L)$, $t \in (-L + R, L - R)$, we have $t _1 := t' + t \in (-4 L + L - R, 4 L - L + R) \supset (-L' + 2 R, L' - 2 R)$. So 
    \begin{align*}
        & \int _{-4 L} ^{4 L} \int _{\Phi _\frac{t'}{U} (B _R (t e _1))} \cM (|\grad u| ^2) \dx \dt' \\
        & \qquad \ge \int _{-L' + 2 R} ^{L' - 2 R} \int _{\Phi _\frac{t _1 - t}{U} (B _R (t e _1))} \cM (|\grad u| ^2) \dx \d t _1 \\
        & \qquad \ge \int _{-L' + 2 R} ^{L' - 2 R} \int _{\Phi _\frac{t _1 - t}{U} (B _R (t e _1)) \cap B _R (t _1 e _1)} \cM (|\grad u| ^2) \dx \d t _1.
    \end{align*}
    Since for each $x \in B _R (t _1 e _1)$, 
    \begin{align*}
        \mathcal M (|\grad u| ^2) (x) \gtrsim \fint _{B _{8 R} (t _1 e _1)} |\grad u| ^2 \dx,
    \end{align*}
    so we can use the assumption in Equation \eqref{eqn: sufficient-intersection} with $\mu = \frac12$, to deduce
    \begin{align*}
        \int _{\Phi _\frac{t _1 - t}{U} (B _R (t e _1)) \cap B _R (t _1 e _1)} \cM (|\grad u| ^2) \dx &\gtrsim \frac12 |B _R| \fint _{B _{8 R} (t _1 e _1)} |\grad u| ^2 \dx \\
        & \approx \int _{B _{8 R} (t _1 e _1)} |\grad u| ^2 \dx. 
    \end{align*}
    Using \cref{SILLYLEMMA} we have
    \begin{align*}
        &\int _{-L' + 2 R} ^{L' - 2 R} \int _{\Phi _\frac{t _1 - t}{U} (B _R (t e _1)) \cap B _R (t _1 e _1)} \cM (|\grad u| ^2) \dx \d t _1 \dt \\
        & \qquad \gtrsim \int _{-L' + 2 R} ^{L' - 2 R} \int _{B _{8 R} (t _1 e _1)} |\grad u| ^2 \dx \dt _1 \gtrsim R \int _{\mathcal C _{4R, L' + 2 R, e _1}} |\grad u| ^2 \dx.
    \end{align*}
    Therefore, 
    \begin{align*}
        \int _{\mathcal C _{R, L, e _1}} \cM _\Phi [\cM (|\grad u| ^2)] \dx &\gtrsim \frac1{LR} \cdot (2 L - 2 R) \cdot R \int _{\mathcal C _{4R, L' + 2 R, e _1}} |\grad u| ^2 \dx \\
        & \ge \int _{\mathcal C _{2 R, L' + 2 R, e _1}} |\grad u| ^2 \dx.
    \end{align*}
    This proves \eqref{eqn: H1-bound-by-maximal-2}, as $\mathcal C _{R, L}$ and $\mathcal C _{2R, L' + 2R}$ have comparable volumes.

    \paragraph{Step 3: Bootstrap} Combining the previous two steps, we know by choosing $\varepsilon _1 \le \frac1{2 C}$, \eqref{eqn: H1-bound-by-maximal} implies \eqref{eqn: H1-bound-by-maximal-2} for every $L' \in [L + R, 2 L]$. Therefore, after finitely many iterations, we conclude \eqref{eqn: H1-bound-by-maximal} is true for $L' = 2 L$, which proves \eqref{eqn: bound-dirichlet-by-maximal}.
\end{proof}

\subsection{Capsule Construction}

For every point $x \in \R^3$, we find a capsule that satisfies our ansatz. 

\begin{proposition}
    \label{prop: construction}
    Fix $0 < \varepsilon _0 < 1$, $0 \le \delta \le \frac5{12}$ and $\sigma \ge \delta$. Let $\varepsilon _1 \ll \varepsilon _0$. For every $x \in \R ^3$, there exists a capsule $\mathcal C _x = \mathcal C _{R (x), L (x), e (x)} (x)$ with radius $R (x)$ and length $L (x)$ pointing in the direction $e (x) \in \mathbb S ^2$ such that
    \begin{align}
        \label{eqn: equality-U-Xi}
        \fint _{B _{R (x)} (x)} u \dx &= b (x) = U (x) e (x), & \fint _{\mathcal C _x} \cM _\Phi [\cM (|\grad u| ^2)] \dx &= \tilde \Xi (x) ^2,
    \end{align}
    where $U (x)$ and $\tilde \Xi (x)$ satisfy
    \begin{align}
        \label{eqn: equality-ULR-Xi}
        0\leq U (x) &\le \frac{L (x)}{R (x) ^2} \pthf{L (x)}{R (x)} ^\sigma, & \tilde \Xi (x) &= \frac{\varepsilon _1}{L (x) R (x)} \pthf{L (x)}{R (x)} ^\delta.
    \end{align}
    The right-hand-side inequality for $U(x)$ becomes an equality when $L (x) > R (x)$. Moreover, it holds that
    \begin{align*}
        \nor{u-b(x)} _{\mathrm{L}^\infty (\mathcal C _x)} &\lesssim \frac1{R (x)}, & \text{ if } L (x) = R (x), \\
        \nor{u - b (x)} _{\mathrm{L}^\infty (\mathcal C _x)} &\lesssim \varepsilon _0 \frac{R (x)}{L (x)} \cdot U (x), & \text{ if } L (x) > R (x).
    \end{align*}
    In particular 
    \begin{align*}
        |u (x)| \lesssim \frac{L (x)}{R (x) ^2} \left(\frac{L (x)}{R (x)}\right) ^{\sigma}.
    \end{align*}
\end{proposition}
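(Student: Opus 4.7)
The goal is to construct, for each $x\in\mathbb R^3$, parameters $R(x), L(x)\ge R(x)$, and $e(x)\in\mathbb S^2$ satisfying \eqref{eqn: equality-U-Xi}--\eqref{eqn: equality-ULR-Xi}; the claimed $\mathrm{L}^\infty$ bounds then follow from \cref{lem: oscillation-estimate} together with the ansatz machinery of \cref{sec: ansatz}. The plan is to reduce this to a scalar problem by solving the $U$-condition for $L$ in terms of $R$ explicitly, and then realizing the $\Xi$-condition by the intermediate value theorem applied to a single scalar function of $R$.

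Fix $x$ and, for each $R > 0$, set $U(R) := \left|\fint_{B_R(x)} u \,\dx\right|$ and let $e(R) \in \mathbb S^2$ be the direction of $\fint_{B_R(x)} u$ (taking any fixed unit vector when $U(R) = 0$). The desired equality $U = (L/R^2)(L/R)^\sigma$ rearranges to $L/R = (UR)^{1/(1+\sigma)}$, so I define
\[
    \rho(R) := \max\bigl\{1,\,(U(R) R)^{1/(1+\sigma)}\bigr\}, \qquad L(R) := R\,\rho(R).
\]
This forces the $U$-condition in \eqref{eqn: equality-ULR-Xi} to hold as an inequality precisely when $\rho(R) = 1$ and as an equality precisely when $\rho(R) > 1$, realizing the stated dichotomy.

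To realize the $\Xi$-condition, set
\[
    F(R) := \fint_{2\mathcal C_{R, L(R), e(R)}(x)} |\grad u|^2 \,\dx \;-\; \frac{\varepsilon_0^2}{L(R)^2 R^2}\pthf{L(R)}{R}^{2\delta}.
\]
As $R \to 0^+$, smoothness gives $U(R) \to |u(x)|$, hence $\rho(R) = 1$ for small $R$, and the first term tends to the finite value $|\grad u(x)|^2$ while the subtracted target $\varepsilon_0^2/R^4$ blows up, yielding $F(R) \to -\infty$. As $R \to \infty$, the vanishing condition \eqref{eqn: vanish} forces $U(R) \to 0$, and finite Dirichlet energy bounds the first term by $\nor{\grad u}_{\mathrm{L}^2(\mathbb R^3)}^2/|2\mathcal C| \lesssim (L R^2)^{-1}$, whereas the subtracted target scales like $L^{2\delta-2}/R^{2\delta+2}$; since $\delta \le 5/12 < 1/2$, a short calculation shows that in both regimes $\rho(R) = 1$ and $\rho(R) \to \infty$ the first term dominates, so $F(R) > 0$ for all sufficiently large $R$. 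The intermediate value theorem then produces a zero $R(x)$ of $F$, and we set $L(x) := L(R(x))$, $e(x) := e(R(x))$. The main obstacle is verifying that $F$ is continuous, especially since $e(R)$ may jump where $U(R) = 0$; fortunately, any such jump occurs at $R$ where $\rho(R) = 1$, so the capsule reduces to the ball $B_{2R}(x)$ on which $F$ is insensitive to $e$, and continuity persists.

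With $\mathcal C_x$ in hand, the ansatz \eqref{eqn: ansatz} is automatic: the equality $L \Xi = \varepsilon_0 (L/R)^\delta/R$ holds by construction, and the companion inequality $L \Xi \lesssim \varepsilon_0 (UR/L + 1/R)$ reduces to $(L/R)^\delta \lesssim (L/R)^\sigma + 1$, which follows from $\sigma \ge \delta$ and $L/R \ge 1$. \Cref{lem: oscillation-estimate} via \eqref{eqn: linfty-estimate} therefore yields $\nor{u - b(x)}_{\mathrm{L}^\infty(\mathcal C_x)} \lesssim \varepsilon_0 (UR/L + 1/R)$. Splitting into cases: when $L = R$, $U \le 1/R$ turns this into the claimed $\lesssim \varepsilon_0/R$; when $L > R$, the identity $UR/L = \rho^\sigma/R \ge 1/R$ turns it into $\lesssim \varepsilon_0 UR/L = \varepsilon_0 U \cdot R/L$. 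The pointwise estimate $|u(x)| \lesssim (L/R^2)(L/R)^\sigma$ finally follows from the triangle inequality $|u(x)| \le |b(x)| + \nor{u - b(x)}_{\mathrm{L}^\infty(\mathcal C_x)}$, combined with $|b(x)| = U(x) \le (L/R^2)(L/R)^\sigma$ and the $\mathrm{L}^\infty$ bound just derived.
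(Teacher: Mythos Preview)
Your approach is essentially identical to the paper's: define $L(R)$ via $\rho(R) = \max\{1, (U(R)R)^{1/(1+\sigma)}\}$, observe that the capsule and hence $F$ vary continuously in $R$ (with the key remark that at points where $e(R)$ is ambiguous the capsule degenerates to a ball), and apply the intermediate value theorem; the verification of the ansatz and the derivation of the $\mathrm{L}^\infty$ bounds also match the paper's.

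There is one slip in your $R\to\infty$ argument. You write that finite Dirichlet energy ``bounds the first term by $\nor{\nabla u}_{\mathrm{L}^2(\mathbb R^3)}^2/|2\mathcal C|$'', but an \emph{upper} bound on the first term cannot give $F(R)>0$; you need a \emph{lower} bound. The fix is that $B_{2R}(x)\subset 2\mathcal C$ exhausts $\mathbb R^3$, so $\int_{2\mathcal C}|\nabla u|^2\to \nor{\nabla u}_{\mathrm{L}^2(\mathbb R^3)}^2>0$ and hence the first term is in fact $\approx (LR^2)^{-1}\nor{\nabla u}_{\mathrm{L}^2}^2$ for large $R$. Your comparison against the target $\varepsilon_0^2 L^{2\delta-2}R^{-2\delta-2}$ then goes through, since the ratio of first term to target is $\gtrsim L^{1-2\delta}R^{2\delta}\ge R\to\infty$ using $\delta<\tfrac12$ and $L\ge R$. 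This is exactly the paper's computation, just phrased in terms of $F$ rather than $\Xi_R L_R^{1-\delta}R^{1+\delta}$.
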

    
\begin{proof}
    We first describe an algorithm for constructing the capsule $\mathcal C _x$. To begin with, for each $x \in \R ^3$, $R > 0$, define 
    \begin{align*}
        \mathcal C _{R} (x) := \mathcal C _{R, L ^*, e ^*} (x)
    \end{align*}
    with $L ^* \ge R$ and $e ^* \in \mathbb S ^2$ determined by 
    \begin{align*}
        \fint _{B _R (x)} u \dx &= b ^* = U ^* e ^*, & 
        L ^* = \max \{U ^* R, 1\} ^\frac1{1 + \sigma} R.
    \end{align*}
    Here $U ^* = |b ^*| \ge 0$, so $U ^*$ and $e ^*$ are uniquely determined whenever the average velocity $b ^*$ is nonzero. If the average velocity is zero, $U ^* = 0$, but the definition of $e ^*$ is ambiguous. However, in this case we have $L ^* = R$, so the definition of $\mathcal C _R (x)$ is the same for whichever $e ^*$ one wishes to pick.

    From the above construction, we see that $U ^*$ and $L ^*$ change continuously in $R$ and $x$, so let us denote $U ^* =: U _R (x)$ and $L ^* =: L _R (x)$. The capsule $\mathcal C _R (x)$ also changes continuously in $R$ and $x$, because $e ^*$ is only discontinuous in the closed set $\{(x, R) \in \R ^3 \times [0, \infty): U _R (x) = 0\}$, but $\mathcal C _R (x)$ is a ball whenever $U _R (x) < \frac1R$. Therefore, $\tilde \Xi _R (x)$ defined below is also continuous in $R$ and $x$:
    \begin{align*}
        \tilde \Xi _R (x) := \left(\fint _{\mathcal C _x (R)} \cM _\Phi [\cM (|\grad u| ^2)]  \dx\right) ^\frac12.
    \end{align*}
    
    It remains to find a critical value $R > 0$ for each $x$ such that $$\tilde \Xi _R (x) L _R ^{1 - \delta} (x) R ^{1 + \delta} = \varepsilon _0.$$ For a fixed $x$, we have shown that $\tilde \Xi _R (x) L _R^{1-\delta} (x) R^{1+\delta}$ is a continuous function of $R \in [0, \infty]$. Note the following limits:
    \begin{align*}
        \lim _{R \to 0} U _R (x) &= |u (x)|, & 
        \lim _{R \to 0} L _R (x) &= 0, & 
        \lim _{R \to 0} \tilde\Xi _R (x) &= (\cM_\Phi [\cM(|\nabla u|^2)] (x)) ^{1/2}, \\
        \lim _{R \to \infty} U _R (x) &= 0, & 
        \lim _{R \to \infty} L _R (x) &= \infty, & 
        \lim _{R \to \infty} \tilde\Xi _R (x) &= 0.
    \end{align*}
    Therefore, 
    \begin{align*}
        \lim _{R \to 0} \tilde \Xi _R (x) L _R ^{1 - \delta} (x) R ^{1 + \delta} &= 0, \\
        \lim _{R \to \infty} \tilde \Xi _R (x) L _R ^{1 - \delta} (x) R ^{1 + \delta} &= \lim _{R \to \infty} \left(\fint _{\mathcal C _x (R)} \cM _\Phi [\cM (|\grad u| ^2) \dx\right) ^\frac12 L _R ^{1 - \delta} (x) R ^{1 + \delta} \\
        &\ge \lim _{R \to \infty} \left(L _R ^{1 - 2 \delta} (x) R ^{2 \delta} \int _{\mathcal C _x (R)} |\grad u| ^2 \dx\right) ^\frac12 \\
        &= \lim _{R \to \infty} \left(L _R ^{1 - 2 \delta} (x) R ^{2 \delta} \int _{\R ^3} |\grad u| ^2 \dx\right) ^\frac12 = +\infty.
    \end{align*}    
    By continuity, equality $\tilde\Xi _R L ^{1 - \delta} _R R ^{1 + \delta} = \varepsilon _0$ must be achieved at some $R ^* > 0$. We pick the smallest $R ^*$ for which the equality is achieved. We can now finally define 
    \begin{align*}
        \mathcal C _x := \mathcal C _{R ^*} (x).
    \end{align*}
    
    We remark again that even though $e$ is not uniquely defined when $U = 0$, the capsule $\mathcal C _x$ degenerates to a ball in this case, so the choice of $e$ is unimportant. Now each $x\in \R^3$ is associated with a cylinder $\mathcal C _x$. Moreover, we have chosen $R (x)$, $b (x)$, $U (x)$, $e (x)$, $L (x)$, $\tilde\Xi (x)$, with the equalities in Equation \eqref{eqn: equality-U-Xi}  holding precisely. As functions of $x$, these functions are not necessarily continuous due to the way we pick $R$, however, in the next subsection we shall show they are locally comparable.

    Let us verify the ansatz \eqref{eqn: ansatz} is satisfied with $\Xi (x) ^2 = \fint _{2 \mathcal C _x} |\grad u| ^2 d x$. The conclusions in Equation \eqref{eqn: equality-ULR-Xi} follow from our construction. If $L = R$, then 
    \begin{align*}
        \tilde \Xi (x) = \frac{\varepsilon _0}{L (x) R (x)} \pthf{L (x)}{R (x)} ^\delta = \frac{\varepsilon _1}{L (x) R (x)}.
    \end{align*}
    By \cref{lem: maximal-controls-2C}, if we choose $\varepsilon _1 \le \frac1C \varepsilon _0$ then $\Xi (x) \le \frac{\varepsilon _0}{L (x) R (x)}$, and \eqref{eqn: ansatz} is true. If $L > R$, then since $\sigma \geq \delta$
    \begin{align*}
        U (x) = \frac{L (x)}{R (x) ^2} \pthf{L (x)}{R (x)} ^\sigma \ge \frac{L (x)}{R (x) ^2} \pthf{L (x)}{R (x)} ^\delta,
    \end{align*}
    and \eqref{eqn: ansatz} is still true.

    By our choice of $L (x)$, we see that if $L (x) = R (x)$, then $U (x) R (x) \le 1$, so $U (x) \le \frac1{R (x)}$, then Equation \eqref{eqn: linfty-estimate} gives the desired estimate. If $L (x) > R (x)$, then since $\sigma \geq 0$:
    \begin{align*}
        L = (U R) ^\frac1{1 + \sigma} R \implies U = \frac1R \pthf LR ^{1 + \sigma} \ge \frac L{R ^2}.
    \end{align*}
    Equation \eqref{eqn: linfty-estimate} once again yields the desired estimate.
\end{proof}

\begin{remark}
    \label{rem: bound-below}
    Since $u$ and $\grad u$ are globally bounded from above, we can see that $R (x)$ and $L (x)$ are globally bounded from below. Equivalently we have that $1/R(x)$, $1/L(x)$ are globally bounded from above.
\end{remark}

It will be useful to distinguish the two cases $L = R$ and $L > R$. 

\begin{definition}
    We call $x \in \R ^3$ a round point if $L (x) = R (x)$, and we call $x\in \R^3$ a long point otherwise. The set of round points is denoted by $\mathcal R$, and the set of long points is denoted by $\mathcal L$.
\end{definition}

\subsection{Comparability and a Vitali Covering Lemma}

Let $x \in \mathcal L$ be a long point, then by \cref{prop: construction} we know
\begin{align*}
    \nor{u - b (x)} _{\mathrm{L}^\infty (\mathcal C _x)} \le \varepsilon _0 \frac {R (x)}{L (x)} \cdot U (x) \le \varepsilon _0 U (x).
\end{align*} 
In particular, $|u (y) - b (x)| \le \varepsilon _0 U (x)$ for every $y \in \mathcal C _x$. Therefore
\begin{align*}
    (1 - \varepsilon _0) U (x) &\le |u (y)| \le (1 + \varepsilon _0) U (x), & |u (x) - u (y)| &\le 2 \varepsilon _0 U (x).
\end{align*}
This means that $u (x) \approx u (y) \approx b (x)$ up to an order $\varepsilon _0$ relative error.

\begin{lemma}
\label{CAPSULEPROPERTY}
    Suppose $x$ and $z$ are long points such that $\mathcal C _x \cap \mathcal C _z \ne \varnothing$. If $R (z) \le 2 R (x)$, then $\mathcal C _z \subset K \mathcal C _x$ for some absolute constant $K$.
\end{lemma}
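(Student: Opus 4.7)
The plan is to exploit the two pointwise estimates that \cref{prop: construction} provides for long points: the weak bound $\nor{u - b}_{\mathrm{L}^\infty(\mathcal C)} \le \varepsilon_0 U$ and the sharp bound $\nor{u - b}_{\mathrm{L}^\infty(\mathcal C)} \lesssim \varepsilon_0 (R/L) U$. Evaluating both estimates at a shared point $y \in \mathcal C_x \cap \mathcal C_z$ pins down the magnitude and direction of $b(z)$ relative to $b(x)$, after which the containment follows by elementary geometry.

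First, I would apply the weak bound to obtain $(1-\varepsilon_0) U(x) \le |u(y)| \le (1+\varepsilon_0) U(x)$ and the analogous two-sided estimate with $U(z)$, yielding $U(x) \approx U(z)$ with ratio depending only on $\varepsilon_0$. From the long-point relation $L = (UR)^{1/(1+\sigma)} R$, together with $R(z) \le 2R(x)$ and $U(z) \approx U(x)$, this at once forces $L(z) \lesssim L(x)$ and hence $R(x)/L(x) \lesssim R(z)/L(z)$.

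Next, I would apply the sharp bound on both capsules at $y$ to get
\begin{equation*}
|b(x) - b(z)| \lesssim \varepsilon_0 \frac{R(x)}{L(x)} U(x) + \varepsilon_0 \frac{R(z)}{L(z)} U(z) \lesssim \varepsilon_0 \frac{R(z)}{L(z)} U(x).
\end{equation*}
Writing $b(x) - b(z) = U(x) e(x) - U(z) e(z)$ and using $U(x) \approx U(z)$, this yields the crucial angle bound $|e(x) - e(z)| \lesssim \varepsilon_0 R(z)/L(z)$. Finally, for any $w \in \mathcal C_z$, decompose $w - z = s\, e(z) + r$ with $|s| \le L(z)$ and $r \perp e(z)$, $|r| \le R(z)$; projecting onto the orthogonal complement of $e(x)$, the transverse distance of $w$ from the line through $z$ in direction $e(x)$ is at most $L(z)\,|e(x) - e(z)| + R(z) \lesssim R(z) \lesssim R(x)$. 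Combined with the estimate $|z - x| \le L(x) + L(z) \lesssim L(x)$ (from the existence of $y$ in both capsules), every point of $\mathcal C_z$ lies within distance $\lesssim R(x)$ of the axis of $\mathcal C_x$ and within $\lesssim L(x)$ of $x$ along it, which gives $\mathcal C_z \subset K\mathcal C_x$ for some absolute constant $K$.

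The main obstacle is the orientation mismatch in the third step: the tips of $\mathcal C_z$ can be displaced from the axis of $\mathcal C_x$ by as much as $L(z) \sin\angle(e(x), e(z))$, and the weak estimate alone yields only $|e(x) - e(z)| \lesssim \varepsilon_0$, producing transverse error of size $\varepsilon_0 L(x)$ — not comparable to $R(x)$ when $L(x)/R(x)$ is large. The resolution is exactly the $(R/L)$ factor in the long-point $\mathrm{L}^\infty$ estimate, which upgrades the angle bound to $|e(x) - e(z)| \lesssim \varepsilon_0 R(z)/L(z)$ and cancels the dangerous $L(z)$ factor upon multiplication.
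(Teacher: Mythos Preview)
Your approach is essentially identical to the paper's: compare $U(x)\approx U(z)$ at the shared point, deduce $L(z)\lesssim L(x)$ from the long-point relation, extract the angle bound $|e(x)-e(z)|\lesssim \varepsilon_0\bigl(R(x)/L(x)+R(z)/L(z)\bigr)$ from the sharp $\mathrm{L}^\infty$ estimate, and finish by elementary geometry. The key insight you highlight---that the $R/L$ factor is precisely what cancels the dangerous $L(z)$ in the transverse error---is exactly the crux of the paper's proof.

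There is one small slip in your final paragraph. You bound the transverse distance of $w\in\mathcal C_z$ from the line through $z$ in direction $e(x)$ by $\lesssim R(x)$, and then invoke $|z-x|\le L(x)+L(z)\lesssim L(x)$ to conclude that $w$ lies within $\lesssim R(x)$ of the axis of $\mathcal C_x$. But the bound on $|z-x|$ controls only the longitudinal offset; it says nothing about the transverse offset of $z$ from the axis of $\mathcal C_x$, which could a priori be as large as $L(x)$. The fix is immediate: use the shared point $y$ instead of $z$ as the bridge. Since $y\in\mathcal C_x$, its transverse distance from the axis of $\mathcal C_x$ is at most $R(x)$; since $y\in\mathcal C_z$, your own bound gives that $y$ is within $\lesssim R(x)$ of the line through $z$ in direction $e(x)$. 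As these two lines are parallel, they are at distance $\lesssim R(x)$ from each other, and the transverse conclusion follows. This is exactly what the paper does by computing $|y_2-\tilde y_2|$ directly.
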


\begin{proof}
    If $x,z \in \mathcal{L}$ such that $\mathcal C _x$ and $\mathcal C _z$ intersect at some point $y$, then we may conclude that $b (x) \approx u (y) \approx b (z)$ and consequently $U (x) \approx U (z)$, $e (x) \approx e (z)$. Since $x,z\in\mathcal{L}$, we know 
    \begin{align*}
        \frac{L (x) ^{1 + \sigma}}{R (x) ^{2 + \sigma}} = U (x) \approx U (z) = \frac{L (z) ^{1 + \sigma}}{R (z) ^{2 + \sigma}} && \implies && L (z) \lesssim 2 ^{\frac{2 + \sigma}{1 + \sigma}} L (x)\lesssim L(x),
    \end{align*}
    where we have used that $\sigma\geq 0$.
    Moreover, since 
    \begin{align*}
        |b (x) - b (z)| \le |b (x) - u (y)| + |u (y) - b (z)| \le \varepsilon _0 \frac{R (x)}{L (x)} U (x) + \varepsilon _0 \frac{R (z)}{L (z)} U (z),
    \end{align*}
    we know that
    \begin{align*}
        |e (x) - e (z)| \lesssim \varepsilon _0 \left(
            \frac{R (x)}{L (x)} + \frac{R (z)}{L (z)} 
        \right).
    \end{align*}

    Without loss of generality, assume $x$ is at the origin and $e (x) = e _1$ is the first basis vector. Then $e (z) = e _z ^1 e _1 + e _z ^2 e _2 + e _z ^3 e _3$ satisfies 
    \begin{align*}
        |1 - e _z ^1| + |e _z ^2| + |e _z ^3| \lesssim \varepsilon _0 \left(
            \frac{R (x)}{L (x)} + \frac{R (z)}{L (z)}
        \right).
    \end{align*}
    Taking any $\tilde y \in \mathcal C _z$, we compute its distance from $y$. For the first coordinate, we control it directly using the diameter:
    \begin{align*}
        |y _1 - \tilde y _1| \le 2 L (z) \le K L (x).
    \end{align*}
    For the second coordinate, we use the fact that the direction $e (z)$ is close to $e _1$. Since $y \in z + t e (z) + B _{R (z)}$ and $\tilde y \in z + \tilde t e (z) + B _{R (z)}$ for some $t, \tilde t \in [- L (z) + R (z), L (z) - R (z)]$, we have 
    \begin{align*}
        |y _2 - \tilde y _2| & \lesssim |t - \tilde t| |e ^2 _z| + 2 R (z) \\
        & \le 
        2 L (z) \cdot \varepsilon _0 \left(
            \frac{R (x)}{L (x)} + \frac{R (z)}{L (z)}
        \right) + 2 R (z) \\
        &= 2 \varepsilon _0 \frac{L (z)}{L (x)} \cdot R (x) + 2 \varepsilon _0 R (z) + 2 R (z) \\ 
        &\le \varepsilon _0 C R (x) + 4 R (x) \le K  R (x),
    \end{align*}
    if $\varepsilon _0$ is chosen small. By a similar estimate for the third coordinate, we conclude that $y - \tilde y \in (K + 1) \mathcal C _x$. Because $y \in \mathcal C _x$, we have $\tilde y \in (K  + 2) \mathcal C _x$, so $\mathcal C _z \subset (K + 2) \mathcal C _x$. 
\end{proof}

We now show that the property proven in \cref{CAPSULEPROPERTY} guarantees that we have the following Vitali-type covering lemma:

\begin{lemma}
\label{COVERING}
    Let $A \subset \mathcal L$ be a set such that $\sup _{x \in A} R (x) < \infty$. Then there exists a countable collection of pairwise disjoint capsules $\{ \mathcal C _{x _i} \} _i$ such that $A \subset \bigcup _i K \mathcal C _{x _i}$ and
    \begin{align*}
        \left| \bigcup _{x \in A} \mathcal C _x \right| \le \sum _{i} |K \mathcal C _{x _i}|.
    \end{align*}
\end{lemma}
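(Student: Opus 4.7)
My plan is to adapt the classical Vitali covering argument, using \cref{CAPSULEPROPERTY} as the geometric comparability input that replaces the elementary fact that two balls with comparable radii satisfying $2B_1 \cap B_2 \ne \varnothing$ are contained in a fixed dilate of the larger one.

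First, I would dyadically partition $A$ by capsule radius. Let $R_0 := \sup_{x \in A} R(x) < \infty$ by hypothesis, and for $j = 0, 1, 2, \ldots$ set
\begin{align*}
    A_j := \left\{ x \in A : \frac{R_0}{2^{j+1}} < R(x) \le \frac{R_0}{2^j} \right\}.
\end{align*}
I would then build the selected family $\{x_i\}$ by induction on $j$. Assume the subfamilies $\mathcal F_0, \ldots, \mathcal F_{j-1}$ have been chosen in earlier shells and are pairwise disjoint. Apply Zorn's lemma to extract a (possibly uncountable at first glance) subfamily $\mathcal F_j \subset \{\mathcal C_x : x \in A_j\}$ that is maximal among subfamilies which are pairwise disjoint and which are disjoint from every capsule in $\mathcal F_0 \cup \cdots \cup \mathcal F_{j-1}$. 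Set $\mathcal F := \bigcup_j \mathcal F_j$.

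Next, I verify the covering property. For any $x \in A$, say $x \in A_j$: by maximality of $\mathcal F_j$, the capsule $\mathcal C_x$ must intersect some selected capsule $\mathcal C_{x_i} \in \mathcal F_{j'}$ with $j' \le j$ (otherwise $\mathcal F_j \cup \{\mathcal C_x\}$ would contradict maximality). Then
\begin{align*}
    R(x_i) > \frac{R_0}{2^{j'+1}} \ge \frac{R_0}{2^{j+1}} \ge \frac{R(x)}{2},
\end{align*}
so $R(x) \le 2 R(x_i)$, and \cref{CAPSULEPROPERTY} applies to give $\mathcal C_x \subset K \mathcal C_{x_i}$. Thus $A \subset \bigcup_i K \mathcal C_{x_i}$, and $\bigl|\bigcup_{x \in A} \mathcal C_x\bigr| \le \sum_i |K \mathcal C_{x_i}|$ follows from subadditivity.

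It remains to promote $\mathcal F$ to a countable family, which I expect to be the only delicate step. Each $\mathcal C_{x_i} \in \mathcal F_j$ has volume comparable to $L(x_i) R(x_i)^2 \gtrsim R(x_i)^3 > (R_0/2^{j+1})^3$, a strictly positive lower bound depending only on $j$. By \cref{rem: bound-below}, every $R(x_i)$ is bounded below globally, so there is even a uniform lower bound on $|\mathcal C_{x_i}|$ across all $i$. The capsules in $\mathcal F$ are pairwise disjoint by construction, so for each $N \in \mathbb N$ the intersection of $\mathcal F$ with $B_N \subset \mathbb R^3$ contributes at most finitely many capsules (by the uniform volume lower bound). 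Taking $N \to \infty$ exhausts $\mathcal F$ through a countable union of finite collections, so $\mathcal F$ itself is countable. Enumerate it as $\{\mathcal C_{x_i}\}_{i \in \mathbb N}$ to conclude. \qed
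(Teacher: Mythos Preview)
Your proof is correct and follows essentially the same strategy as the paper: a Vitali-type selection using \cref{CAPSULEPROPERTY} as the geometric engulfing input. The only difference is the selection mechanism---the paper runs a sequential greedy algorithm (at each step pick a capsule whose radius exceeds half the current supremum, then discard everything it meets), whereas you stratify $A$ dyadically in $R(x)$ and invoke Zorn's lemma within each shell; both are standard variants of the Vitali argument and yield the same conclusion.
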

\begin{proof}
    Let $\mathcal{Q}^{(0)}:= \{\mathcal C_x\}_{x\in A}$ be the whole collection of capsules. For any $j\geq 1$, we first let $x_j$ be any point such that
    $$R(x_j) > \frac{1}{2} \sup_{\mathcal C_x \in \mathcal{Q}^{(j-1)}} R(x).$$
    Then we eliminate from the collection $\mathcal{Q}^{(j-1)}$ any capsule $\mathcal{C}_x$ that intersects $\mathcal{C}_{x_j}$ to create the collection $\mathcal{Q}^{(j)}$. This algorithm may cease after finitely many iterations if the collection $\mathcal{Q}^{(j+1)}=\varnothing$ for some $j\geq 0$, or it may never cease. In any case, we have constructed a countable collection of capsules
    $\{\mathcal{C}_{x_j}\}_{j\geq 0}$ that are pairwise disjoint by construction.
    
Suppose that $\sum _j \left|K\mathcal{C}_{x_j}\right| < \infty$, otherwise the final claim is already true. It follows that either $\{\mathcal{C}_{x_j}\}_{j\geq 0}$ is a finite collection, or it is an infinite collection with $R(x_j) \to 0$ as $j \to \infty$. In either case, every capsule $\mathcal{C}_{x}$ must be removed from $\mathcal{Q}^{(j)}$ at some iteration step. Otherwise, we would have $\mathcal{C}_{x} \in \mathcal{Q}^{(j - 1)}$ for all $j$, and the first step in the algorithm would then imply that $R(x_j)> \frac12 R(x)$ for all $j$. However, the sequence $R(x_j)$ would then not converge to zero. Now suppose $\mathcal{C}_{x} \in \mathcal{Q}^{(j - 1)} \setminus \mathcal{Q}^{(j)}$. Then we have $\mathcal{C}_{x} \cap \mathcal{C}_{x_j} \neq \varnothing$ and $R(x)< 2R(x_j)$. By \cref{CAPSULEPROPERTY} we have
\begin{align*}
    \mathcal{C}_{x} \subset K\mathcal{C}_{x_j}.
\end{align*}
Thus $\bigcup _{x\in A} \mathcal{C}_{x} \subset \bigcup _{j \geq 0}  K\mathcal{C}_{x_j}$ and we have:
\begin{align*}
    \left|
        \bigcup _{x\in A} \mathcal{C}_{x}
    \right| \le 
    \left|
        \bigcup _{j\geq 0} K\mathcal{C}_{x_j} 
    \right| \le \sum _{j \geq 0} |K\mathcal{C}_{x_j} |.
\end{align*}
This proves the covering lemma.
\end{proof}

A corollary of the covering lemma is that $\tilde\Xi \in \mathrm{L}^{2^+, \infty} (\R^3)$.

\begin{corollary}
For any $\epsilon>0$, we have that
    $\tilde\Xi$ is in $\mathrm{L}^{2+\epsilon, \infty} (\R^3)$ with bound 
    \begin{align*}
        \| \tilde\Xi \| _{\mathrm{L}^{2+\epsilon, \infty} (\R^3)} \lesssim _\epsilon \nor{\nabla u} _{\mathrm{L}^{2+\epsilon} (\R ^3)}.
    \end{align*}
\end{corollary}

\begin{proof}
    We denote the set
    \begin{align*}
        S _\alpha = \{ x \in \R ^3: \tilde\Xi (x) ^2 > \alpha \},
    \end{align*}
    then $|S _\alpha| = |S _\alpha \cap \mathcal R| + |S _\alpha \cap \mathcal L|$. Note that $\tilde\Xi (x) ^2 > \alpha$ implies $R ^2 \le \frac{\varepsilon _0}{\tilde \Xi}$ is bounded in $S _\alpha$.
    For the long points, we have by \cref{COVERING} that
    \begin{align*}
        |S _\alpha \cap \mathcal L| \le \left| \bigcup _{x \in S _\alpha \cap \mathcal L} \mathcal C _x \right| \le \sum _{i} |K \mathcal C _{x _i}|.
    \end{align*}
    In the last inequality, a Vitali covering is chosen so that the capsules $\mathcal C _{x _i}$ are pairwise disjoint, and $K  \mathcal C _{x _i}$ covers $S _\alpha \cap \mathcal L$. By Jensen's inequality, we have for any $\epsilon>0$:
    $$\fint _{\mathcal C _x} \cM _\Phi [\cM (|\grad u| ^2)] \dx\leq \left(\fint _{\mathcal C _x} \cM _\Phi [\cM (|\grad u| ^2)]^{1+\epsilon} \dx\right)^{\frac{1}{1+\epsilon}}.$$  
    Moreover, $\tilde\Xi (x _i) ^2 > \alpha$ now means 
    \begin{align*}
       & \left(\fint _{\mathcal C _{x _i}} \cM _\Phi [\cM (|\grad u| ^2)]^{1+\epsilon} \dx\right)^{\frac{1}{1+\epsilon}} > \alpha \\
       \iff & \int _{\mathcal C _{x _i}} \cM _\Phi [\cM (|\grad u| ^2)]^{1+\epsilon} \dx > \alpha^{1+\epsilon} |\mathcal C _{x _i}| \\ 
       \iff & |\mathcal C _{x _i}| \le \frac{1}{\alpha^{1+\epsilon}} \int _{\mathcal C _{x _i}} \cM _\Phi [\cM (|\grad u| ^2)]^{1+\epsilon} \dx.
    \end{align*}
    Hence 
    \begin{align*}
        |S _\alpha \cap \mathcal L| & \le \frac{C}{\alpha ^{1 + \epsilon}} \sum _i \int _{\mathcal C _{x _i}} \cM _\Phi [\cM (|\grad u| ^2)]^{1+\epsilon} \dx \\
        & \le \frac {C}{ \alpha^{1+\epsilon}} \int _{\R ^3} \cM _\Phi [\cM (|\grad u| ^2)]^{1+\epsilon} \dx \\
        & \le \frac {C}{ \alpha^{1+\epsilon}} \int _{\R ^3} |\grad u| ^{2(1+\epsilon)} \dx.
    \end{align*}
    In the last inequality, we used that $\cM _\Phi \circ \cM$ is a bounded map from $L ^{1 + \epsilon}$ to $L ^{1 + \epsilon}$.
    For the round points, we can use the usual Vitali covering for balls. Combining the inequalities we have 
    \begin{align*}
        |S _\alpha| \le \frac {C}{\alpha^{1+\epsilon}} \int _{\R ^3} |\grad u| ^{2(1+\epsilon)} \dx.
    \end{align*}
    Recall $\grad u \in L ^2 \cap L ^\infty$, so the right-hand side is finite.
    Therefore $\tilde{\Xi} ^2$ is in $\mathrm{L}^{1+\epsilon, \infty}$, so $\tilde\Xi$ is in $\mathrm{L}^{2(1+\epsilon), \infty}$.
    Replacing $\epsilon$ with $\epsilon/2$ in the course of the proof gets us our claimed result.
\end{proof}

From this, we deduce that 

\begin{proposition}
\label{BESTPROP}
    If $L / R$ is bounded then $u$ is trivial.
\end{proposition}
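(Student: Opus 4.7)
The plan is to leverage the pointwise bound $|u(x)| \lesssim 1/R(x)$ from \cref{prop: construction} together with the weak-type control on $\Xi$ from the preceding corollary to place $u$ in $\mathrm{L}^{9/2}(\R^3)$, after which Galdi's Liouville theorem from \cite{G} gives $u \equiv 0$.

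First, I would record what the hypothesis $L(x)/R(x) \le M$ does to the quantities produced by \cref{prop: construction}. Since $L(x) \approx R(x)$ for every $x$, the final pointwise inequality of \cref{prop: construction} collapses to
\[
    |u(x)| \lesssim \frac{L(x)}{R(x)^2} \pthf{L(x)}{R(x)}^\sigma \lesssim \frac{1}{R(x)},
\]
with implicit constants depending on $M$ and $\sigma$. At the same time, the explicit identity $\Xi(x) = \varepsilon_0 L(x)^{\delta - 1}/R(x)^{1 + \delta}$ (valid in both the round and long cases) simplifies to $\Xi(x) \approx 1/R(x)^2$, with comparability constants depending only on $M$, $\delta$, and $\varepsilon_0$.

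Second, I would feed $\Xi \approx R^{-2}$ into the preceding corollary's conclusion $\Xi \in \mathrm{L}^{2, \infty}(\R^3)$. This yields $R^{-2} \in \mathrm{L}^{2, \infty}$, hence $R^{-1} \in \mathrm{L}^{4, \infty}$, and therefore $u \in \mathrm{L}^{4, \infty}(\R^3)$. On the other hand, being a $D$-solution satisfying Equation \eqref{eqn: vanish}, $u$ lies in $\mathrm{L}^6(\R^3)$ by Sobolev embedding, as the introduction observed. A routine real interpolation between $\mathrm{L}^{4, \infty}$ and $\mathrm{L}^6$---carried out by splitting the layer-cake integral $\int_0^\infty \alpha^{7/2} |\{|u| > \alpha\}| \d \alpha$ at the threshold where the distributional bounds $\alpha^{-4}$ and $\alpha^{-6}$ coincide---then places $u$ in $\mathrm{L}^{9/2}(\R^3)$.

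With $u \in \mathrm{L}^{9/2}(\R^3)$ established, the final step is to invoke Galdi's theorem \cite{G}: any smooth solution of the stationary Navier--Stokes equations in $\mathrm{L}^{9/2}(\R^3)$ vanishes identically. I do not anticipate a substantive obstacle: the capsule machinery and \cref{COVERING} have already supplied the key weak-$\mathrm{L}^2$ bound on $\Xi$, and the role of the boundedness of $L/R$ is simply to convert that control into pointwise control of $|u|$ via the elementary scaling $|u| \lesssim R^{-1} \approx \Xi^{1/2}$. The only minor calculation is the interpolation into $\mathrm{L}^{9/2}$, which is standard.
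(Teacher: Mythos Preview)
Your proposal is correct and follows essentially the same route as the paper: use $L/R$ bounded to collapse the pointwise bound to $|u(x)| \lesssim R(x)^{-1} \approx \Xi(x)^{1/2}$, invoke the corollary $\Xi \in \mathrm{L}^{2,\infty}$ to get $u \in \mathrm{L}^{4,\infty}$, then interpolate into $\mathrm{L}^{9/2}$ and apply Galdi. The only cosmetic difference is that the paper interpolates $\mathrm{L}^{4,\infty}$ against $\mathrm{L}^\infty$ (available because $u$ is smooth and vanishes at infinity) rather than against your $\mathrm{L}^6$; either endpoint lands $u$ in $\mathrm{L}^{9/2}$.
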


\begin{proof}
    If $L / R$ is bounded, then $L$ and $R$ are comparable. Then by \cref{prop: construction}
    \begin{align*}
        |u (x)| \le \frac{L (x)}{R (x) ^2} \pthf{L (x)}{R (x)} ^\sigma \approx \frac C {R (x)} \approx \tilde\Xi (x) ^\frac12,
    \end{align*}
    Therefore $u \in \mathrm{L}^{4^+, \infty} (\R ^3)$. Together with $u \in \mathrm{L}^\infty$, we know that $u \in \mathrm{L}^\frac92$ and thus $u \equiv 0$ by Galdi's result.
\end{proof}

\section{Conditional Liouville Theorems}

Here we give some conditions under which $u$ is trivial, proving the theorems claimed in our introduction. 

\subsection{Bounded Line Integral Hypothesis}

The stretching ratio $L/R$ is comparable in size to a certain line integral, as the following lemma will demonstrate. 
\begin{lemma}
\label{LINEINTEGRAL}
    Let $x \in \mathcal L$ be a long point. Then 
    \begin{align}
        \label{eqn: line-integral-val}
        \int _{x - L (x) e (x)} ^{x + L (x) e (x)} u \cdot \d \ell \approx L (x) U (x).
    \end{align}
    Here $L (x)$, $U (x)$ and $e (x)$ are constructed in \cref{prop: construction}.
\end{lemma}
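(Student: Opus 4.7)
My plan is to parametrize the segment from $x - L (x) e (x)$ to $x + L (x) e (x)$ by $\gamma (t) = x + t e (x)$ with $t \in [-L (x), L (x)]$, so that
\begin{align*}
    \int _{x - L (x) e (x)} ^{x + L (x) e (x)} u \cdot \d \ell = \int _{-L (x)} ^{L (x)} u (x + t e (x)) \cdot e (x) \dt.
\end{align*}
Since $b (x) = U (x) e (x)$, I would split the integrand into the constant drift $U (x) = b (x) \cdot e (x)$ plus the oscillation $(u - b (x)) \cdot e (x)$. The drift contributes exactly $2 L (x) U (x)$, so the whole question reduces to bounding the integral of the oscillation along the segment.

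The next step is to observe that the entire segment lies inside $\mathcal C _x$: for $|t| \le L (x) - R (x)$ this is immediate from the definition \eqref{eqn: capsule}, and for $L (x) - R (x) \le |t| \le L (x)$ the point $x + t e (x)$ lies in the end-cap ball of radius $R (x)$ centered at $x \pm (L (x) - R (x)) e (x)$. Since $x$ is a long point, the sharp pointwise bound from \cref{prop: construction} gives
\begin{align*}
    \nor{u - b (x)} _{\mathrm{L}^\infty (\mathcal C _x)} \lesssim \varepsilon _0 \frac{R (x)}{L (x)} U (x).
\end{align*}
Integrating this estimate over a segment of length $2 L (x)$ bounds the oscillation contribution by $\lesssim \varepsilon _0 R (x) U (x) \le \varepsilon _0 L (x) U (x)$, where the last inequality uses $R (x) \le L (x)$ because $x \in \mathcal L$.

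Combining the two pieces yields
\begin{align*}
    \left|
        \int _{x - L (x) e (x)} ^{x + L (x) e (x)} u \cdot \d \ell - 2 L (x) U (x)
    \right| \lesssim \varepsilon _0 L (x) U (x),
\end{align*}
which is exactly \eqref{eqn: line-integral-val} in the paper's $\approx$ convention, once the harmless factor of two is absorbed via $L (x) U (x) \approx 2 L (x) U (x)$ and $\varepsilon _0$ is taken sufficiently small (as is already required to invoke \cref{CAPSULEPROPERTY}). I do not foresee a substantive obstacle here: the analytical content is entirely packaged into \cref{prop: construction}, and the only detail that warrants care is verifying that the endpoints of the segment lying in the hemispherical caps of $\mathcal C _x$ remain inside $\mathcal C _x$, so that the pointwise oscillation bound applies along the whole integration path.
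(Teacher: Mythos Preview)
Your proposal is correct and follows essentially the same route as the paper: split $u = b(x) + (u - b(x))$ along the segment, compute the drift contribution as $2L(x)U(x)$, and bound the oscillation via the $\mathrm{L}^\infty$ estimate from \cref{prop: construction}. The only cosmetic difference is that the paper uses the cruder bound $|u - b(x)| \le C\varepsilon_0 U(x)$ rather than your sharper $\varepsilon_0 \tfrac{R(x)}{L(x)}U(x)$, but both suffice; your extra care in checking that the endpoints lie in the hemispherical caps of $\mathcal C_x$ is a detail the paper leaves implicit.
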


\begin{proof}
    According to \cref{prop: construction}, we know that in $\mathcal C _x$ we have $u (y) = U (x) e (x) + \zeta (y)$ with $|\zeta| \le C \varepsilon _0 U (x) < \frac12 U (x)$. We thus have 
    \begin{align*}
        \left|\int _{x - L (x) e (x)} ^{x + L (x) e (x)} \zeta \cdot \d \ell\right| &\le 2 L (x) \cdot \frac12 U (x) \\
        &= \frac12 \int _{x - L (x) e (x)} ^{x + L (x) e (x)} U (x) e (x) \cdot \d \ell.
    \end{align*}
    We can thus bound the line integral from above and below by:
    \begin{align*}
        \frac12 \int _{x - L (x) e (x)} ^{x + L (x) e (x)} U (x) e (x) \cdot \d \ell \le \int _{x - L (x) e (x)} ^{x + L (x) e (x)} u \cdot \d \ell \le \frac32 \int _{x - L (x) e (x)} ^{x + L (x) e (x)} U (x) e (x) \cdot \d \ell.
    \end{align*}
    The upper and lower bounds are both of order $L (x) U (x)$, which is exactly the right-hand side of Equation \eqref{eqn: line-integral-val}.
\end{proof}

Suppose that the line integral of $u$ is globally bounded: 
\begin{align*}
    \int _{x _0} ^x u \cdot \d \ell \le C, \qquad \forall x _0, x \in \R ^3,
\end{align*}
then the stretching ratio $L U \approx \pthf LR ^{2 + \sigma}$ is bounded by \cref{LINEINTEGRAL}. \cref{BESTPROP} would then imply that the solution is trivial. Since there is still a gap between $\mathrm{L}^{4 ^+, \infty}$ and $\mathrm{L}^\frac92$, we can relax the boundedness assumption and allow an assumption like Equation \eqref{eqn: second-thm} in \cref{SECONDTHM}.

\begin{proof}[Proof of \cref{SECONDTHM}]
    Combining the assumption in Equation \eqref{eqn: second-thm} with Equation \eqref{eqn: line-integral-val}, we know that at any $x \in \mathcal L$: 
    \begin{align*}
        L U \approx \left(\frac LR \right) ^{2 + \sigma} \lesssim L ^\beta. 
    \end{align*}
    As discussed in \cref{rem: bound-below}, $L$ is bounded from below, so the above inequality is also true for round points. Now by \cref{prop: construction}
    \begin{align*}
        |u| \lesssim \frac1L \left(\frac LR\right) ^{2 + \sigma} 
        \implies \frac{|u| ^p}{\tilde\Xi ^2} & \lesssim L ^{2 - p} R ^2 \left(\frac LR\right) ^{(2 + \sigma) p - 2 \delta} \\
        & = L ^{4 - p} \left(\frac LR\right) ^{(2 + \sigma) p - 2 \delta - 2} \\
        & \lesssim L ^{4 - p + \beta (p - \frac{2 (\delta + 1)}{2 + \sigma})}.
    \end{align*}
    If the power of $L$ satisfies
    \begin{align*}
        4 - p + \beta \left(p - \frac{2 (\delta + 1)}{2 + \sigma}\right) = 0,
    \end{align*}
    then $|u| ^p / \tilde\Xi ^2$ is bounded. Because $\tilde\Xi ^2 \in \mathrm{L}^{1+, \infty}$, we conclude that $u \in \mathrm{L}^{p+, \infty}$. We get the optimal $p = \frac{4 - 34 \beta / 29}{1 - \beta}$ when we set $\sigma = \frac5{12}$ and $\delta = \frac5{12}$, If $\beta < \frac{29}{193}$ then $p < \frac92$, and $u \in \mathrm{L}^{p, \infty} \cap \mathrm{L}^\infty$ implies $u \in \mathrm{L}^\frac92$. Galdi's criterion then asserts that $u \equiv 0$. 
\end{proof}

\subsection{Mean Oscillation Hypothesis}

Previous results showed that if $u \in \BMO ^{-1}$, then $u$ is trivial. We make some refinements in this direction. Suppose $u = \curl \psi$ with $\psi \in \mathrm{L}^1 _{loc}$.

\begin{lemma}
    \label{lem: bmo}
    Let $x \in \mathcal L$ be a long point. Then 
    \begin{align}
        \label{eqn: stream-integral-val}
        \int _{B _{R (x)} (x)} (\psi (y) - (\psi) _{B _{R (x)}}) \cdot (e (x) \times (y - x)) \d y \approx U (x) R (x) ^5.
    \end{align}
    Here $L (x)$, $R (x)$ and $e (x)$ are constructed in \cref{prop: construction}.
\end{lemma}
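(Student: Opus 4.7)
The plan is to convert the $\psi$-integral into a $u$-integral by integration by parts, exploiting the freedom to add a gradient to a potential of $e \times (y-x)$ so as to kill the boundary contribution, and then to invoke the $\mathrm{L}^\infty$ control on $u - Ue$ supplied by \cref{prop: construction}.

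After translating so $x = 0$ and rotating so $e(x) = e_1$, write $R = R(x)$, $U = U(x)$. Odd symmetry gives $\int_{B_R} e_1 \times y\,\d y = 0$, so the constant $(\psi)_{B_R}$ does not contribute and the quantity to estimate reduces to $I := \int_{B_R} \psi(y)\cdot(e_1\times y)\,\d y$. Writing $e_1 \times y = \nabla \times A$ and combining the identity $\nabla\cdot(\psi\times A) = (\nabla\times\psi)\cdot A - \psi\cdot(\nabla\times A)$ with the divergence theorem yields
\[
I = \int_{B_R} u\cdot A\,\d y \;-\; \int_{\partial B_R} \psi\cdot(A\times n)\,\d S
\]
for any smooth $A$ with $\nabla \times A = e_1 \times y$.

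The decisive step is the choice of $A$. The naive antiderivative $-\frac12|y|^2 e_1$ has the correct curl, but leaves a boundary term involving $\psi|_{\partial B_R}$ that is not controlled by our hypotheses. Adding the gradient $\nabla\bigl(\frac12|y|^2(y\cdot e_1)\bigr)$ does not change the curl and produces
\[
A(y) = (y\cdot e_1)\,y,
\]
which is parallel to $y$ at every point, hence parallel to the outward unit normal $n = y/R$ on $\partial B_R$. Consequently $A\times n \equiv 0$ on $\partial B_R$, the boundary term vanishes identically, and one arrives at $I = \int_{B_R}(y\cdot e_1)\,(u(y)\cdot y)\,\d y$.

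Finally, inserting the velocity decomposition $u = Ue_1 + \zeta$ from \cref{prop: construction}, which is valid on $\mathcal C_x \supset B_R$ for long points with $\nor{\zeta}_{\mathrm{L}^\infty(\mathcal C_x)} \lesssim \varepsilon_0 U$, splits $I$ into a main term $U\int_{B_R} y_1^2\,\d y = \frac{4\pi}{15}\,UR^5$ and an error bounded by $\varepsilon_0 U\int_{B_R}|y|^2\,\d y \lesssim \varepsilon_0 UR^5$. Provided $\varepsilon_0$ is fixed small at the outset, as was already required in \cref{CAPSULEPROPERTY}, the main term dominates the error and one concludes $I \approx UR^5$ with absolute constants, which is the claim. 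I expect the only genuinely non-routine point to be the engineering of $A$; everything else reduces to elementary symmetry computations and the pre-established sup-norm control on $u - Ue_1$ inside $\mathcal C_x$.
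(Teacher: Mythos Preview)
Your proof is correct and follows essentially the same route as the paper: both identify the vector potential $A(y)=(y\cdot e_1)\,y=y_1 y$ with $\curl A=e_1\times y$, exploit that $A\parallel n$ on $\partial B_R$ to kill the boundary term, reduce to $\int_{B_R} u\cdot y_1 y\,\d y$, and then split via $u=Ue_1+\zeta$ with $\|\zeta\|_{\mathrm{L}^\infty}\lesssim\varepsilon_0 U$. The only cosmetic difference is that you motivate the choice of $A$ by starting from $-\tfrac12|y|^2 e_1$ and correcting with a gradient, whereas the paper simply writes $e_1\times y=\curl(y_1 y)$ directly.
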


\begin{proof}
    For simplicity, assume $x = 0$ is the origin and $e (x) = e _1$, and let us abbreviate $R = R (x)$, $L = L (x)$, and $U = U (x)$. We observe that 
    \begin{align*}
    \int _{B _R} (\psi (y) - (\psi) _{B _{R}}) \cdot (e _1 \cross y) \d y &= \int _{B _R} (\psi (y) - (\psi) _{B _{R}}) \cdot \curl (y _1 y) \dx \\
    &= \int _{B _R} \curl \psi \cdot y _1 y \d y + \int _{\partial B _R} (y _1 y) \cross \psi \cdot n \d \sigma \\
    &= \int _{B _R} u \cdot y _1 y \d y.
\end{align*}
We used that $y$ is parallel to $n$ on $\partial B _R$. We want to use the fact that $u \approx U e _1$ with small relative error. Note that 
\begin{align*}
    \int _{B _R} U e _1 \cdot y _1 y \d y = U \int _{B _R} y _1 ^2 \d y = \frac U3 \int _{B _R} |y| ^2 \d y = \frac{U}{15} R ^5.
\end{align*}
By \cref{prop: construction}, we know 
\begin{align*}
    \left|\int _{B _R} (u (y) - U e _1) \cdot y _1 y \d y\right| \le \varepsilon _0 U (x) \int _{B _R} |y| ^2 \d y \le \frac{U}{30} R ^5.
\end{align*}
Therefore, we have 
\begin{align*}
    \int _{B _R} u \cdot y _1 y \d y \approx U R ^5. %
\end{align*}
This finishes the proof of the lemma.
\end{proof}

Write $\bar \psi = \fint _{B _R} \psi$. Notice that for any $s \ge 1$:
\begin{align}
    \label{eqn: control-mean-oscillation}
    \int _{B _R} (\psi - \bar \psi) \cdot (e _1 \cross y) \d y \le R ^4 \left(\fint _{B _R} |\psi - \bar \psi|^s \dx\right)^{1/s}.
\end{align}
If $\nor{\psi} _{\BMO} \le C$, then $U R \approx \left(\frac {L}{R} \right) ^{1 + \sigma} \lesssim \nor{\psi} _{\BMO}$, and we conclude that the ratio $L / R$ is globally bounded, so $u\equiv 0$ by \cref{BESTPROP}. Again, there is some room for improvement.

\begin{proof}[Proof of \cref{FIRSTTHM}]
    Combining Equation \eqref{eqn: stream-integral-val}-\eqref{eqn: control-mean-oscillation} and the assumption in Equation \eqref{eqn: first-thm}, we know that for $x \in \mathcal L$ with $R (x) > 1$ it holds that 
    \begin{align*}
        \pthf LR ^{1 + \sigma} \lesssim R ^\alpha .
    \end{align*}
    The set $R (x) \le 1$ is a bounded set, and $R$ is bounded from below, so the above holds for all $x \in \R ^3$. Again by \cref{prop: construction}
    \begin{align*}
        |u| \lesssim \frac1R \left(\frac LR\right) ^{1 + \sigma} 
        \implies \frac{|u| ^p}{\tilde\Xi ^2} & \lesssim L ^2 R ^{2 - p} \left(\frac LR\right) ^{(1 + \sigma) p - 2 \delta} \\
        & = R ^{4 - p} \left(\frac LR\right) ^{(1 + \sigma) p - 2 \delta + 2} \\
        & \lesssim R ^{4 - p + \alpha \cdot (p - \frac{2 (\delta - 1)}{1 + \sigma})}.
    \end{align*}
    If the power of $R$ satisfies
    \begin{align*}
        4 - p + \alpha \left(p + \frac{2 (1 - \delta)}{1 + \sigma}\right) = 0,
    \end{align*}
    then $|u| ^p/\tilde\Xi^2$ is bounded, and we conclude $u \in \mathrm{L}^{p ^+, \infty}$. By sending $\sigma \to +\infty$, we get any $p > \frac{4}{1 - \alpha}$. If $\alpha < \frac19$ then we can find some $p < \frac92$, and again $u \equiv 0$. 
\end{proof}

\subsection{Alternative Proofs}

In the previous proof, we can see our construction in \cref{prop: construction} and \cref{BESTPROP}, $u$ is in $\mathrm L ^{4 ^+, \infty} (\mathcal R)$ for the round points, and additional assumptions added in \cref{FIRSTTHM} and \ref{SECONDTHM} are used to show $\mathrm L ^{p, \infty} (\mathcal L)$ for the long points. We can also use an alternative ansatz to weaken what we can obtain for the round points, and strengthen the conclusions for the long points. We briefly illustrate the ideas here. 

\subsubsection{New Ansatz and Construction}

In \cref{sec: ansatz}, instead of assuming \eqref{eqn: ansatz}, we replace it by the following alternative assumption for $\lambda \ge 1$:
\begin{align}
    \label{eqn: ansatz2}
    L \Xi = \frac{\varepsilon _0}{R ^{\lambda}}.
\end{align}
This would lead to the following bounds instead of \eqref{eqn: linfty-estimate}, provided $R \ge 1$:
\begin{align*}
    \nor{u - U e _1} _{\mathrm{L}^\infty (\mathcal C)} \lesssim \Xi R ^\frac12 V ^\frac12 \left(\frac {UR}L + \frac1{R} \right) + L \Xi \lesssim \varepsilon _0 \left(\frac{UR}{L} + \frac1{R ^{\lambda}}\right).
\end{align*}
In the proof of \cref{prop: construction}, instead of setting 
$
    L ^* = \max \{U ^* R, 1\} ^\frac1{1 + \sigma} R
$,
we fix some $\gamma \in [0, \lambda]$ and set
\begin{align*}
    L ^* = \max \{U ^* R^\gamma, 1\} R.    
\end{align*}
We would end up with $U (x)$ satisfying 
\begin{align*}
    U (x) \le \frac{L (x)}{R (x) ^{1 + \gamma}}
\end{align*}
rather than \eqref{eqn: equality-ULR-Xi}. Again, equality is achieved for the long points. The following conclusions will hold:
\begin{align*}
    \nor{u - b(x)} _{\mathrm{L}^\infty (\mathcal C _x)} &\lesssim \varepsilon_0 \frac1{R (x) ^\gamma}, & \text{ if } L (x) = R (x), \\
    \nor{u - b (x)} _{\mathrm{L}^\infty (\mathcal C _x)} &\lesssim \varepsilon _0 \frac{R (x)}{L (x)} \cdot U (x), & \text{ if } L (x) > R (x), \\
    |u (x)| &\lesssim \frac{L (x)}{R (x) ^{1 + \gamma}}.
\end{align*}

\subsubsection{New Proof of Theorem 1.1}

By \cref{lem: bmo}, we conclude for a long point $x$ it holds that
\begin{align*}
    \int _{B _{R (x)} (x)} (\psi (y) - (\psi) _{B _{R (x)}}) \cdot (e (x) \times (y - x)) \d y \approx U R ^5 \approx L R ^{4 - \gamma}.
\end{align*}
Therefore 
\begin{align*}
    L R ^{4 - \gamma} \lesssim R ^4 \left(\fint _{B _R} |\psi - \bar \psi|^s \dx\right)^{1/s} \lesssim R ^{4 + \alpha} \implies L \lesssim R ^{\alpha + \gamma}.
\end{align*}
By setting $\gamma = 1 - \alpha$, we know $L / R$ is globally bounded. Therefore 
\begin{align*}
    |u| \lesssim \frac L{R ^{1 + \gamma}} \approx R ^{-\gamma}, \Xi \approx \frac{\varepsilon _0}{L R ^\lambda} \approx R ^{-1-\lambda} \implies \frac{|u| ^p}{\tilde\Xi ^2} \lesssim R ^{-p \gamma + 2 + 2 \lambda}.
\end{align*}
Setting $\lambda = 1$, we have the following consequence in the region $\{R \ge 1\}$:
\begin{align*}
    |u| ^p \lesssim \tilde\Xi ^2 \impliedby -p \gamma + 2 + 2 \lambda \le 0 \iff p (1 - \alpha) \ge 4 \iff p \ge \frac4{1 - \alpha}.
\end{align*}

\subsubsection{New Proof of Theorem 1.2}

By \cref{LINEINTEGRAL}, we conclude for a long point $x$ it holds that 
\begin{align*}
    \int _{x - L (x) e (x)} ^{x + L (x) e (x)} u \cdot \d \ell \approx L (x) U (x) \approx \frac{L (x) ^2}{R (x) ^{1 + \gamma}}.
\end{align*}
Therefore 
\begin{align*}
    \frac{L ^2}{R ^{1 + \gamma}} \lesssim L ^\beta \implies L ^{2 - \beta} \lesssim R ^{1 + \gamma}.
\end{align*}
Setting $\gamma = 1 - \beta$, we know $L / R$ is globally bounded. Proceeding as in the proof for Theorem \ref{FIRSTTHM}, we show $u \in \mathrm{L} ^{p ^+, \infty}$ with $p \ge \frac4{1 - \beta}$. Unfortunately, this method cannot cover the entire $p \ge \frac{4 - 34\beta/29}{1 - \beta}$ range.

\section*{Acknowledgments}

We are grateful to Camillo De Lellis for valuable discussions on the Navier--Stokes Equations. MPC acknowledges the support of the National Science Foundation in the form of an NSF Graduate Research Fellowship and under Grant No. DMS-2350252. JY acknowledges the support by the National Science
Foundation under Grant No. DMS-1926686.

\bibliographystyle{abbrv}
\bibliography{reference}%

\begin{appendix}

\renewcommand{\nu}{}

\section{Estimates for the Poisson Equation with Drift}
\label{app}

In this section, we discuss the Poisson equation with constant drift $b = U e _1$:
\begin{align}
    \label{eqn: Oseen}
    b \cdot \grad \theta - \nu \La \theta = f + \div g.
\end{align} 
We find the fundamental solution and derive $\mathrm{L}^p$ estimates.

\subsection{The Fundamental Solution}

We first find the fundamental solution to Equation \eqref{eqn: Oseen} in three dimensions.
\begin{lemma}
    The fundamental solution $\Gamma$ that solves $b \cdot \grad \Gamma - \nu \La \Gamma = \delta _0$ is
    \begin{align*}
        \Gamma (x) = \frac{1}{4\pi \nu r} \e ^{-\lambda (r - x _1)}, \qquad \lambda = \frac{U}{2\nu}.
    \end{align*}
    Here $x = (x _1, x _2, x _3)$, $b = U e _1$, $r = |x|$, and $U\geq 0$.
\end{lemma}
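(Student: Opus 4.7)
The plan is to eliminate the first-order drift term by an exponential change of unknown, thereby reducing the equation to the classical three-dimensional Yukawa (screened Poisson) equation. Specifically, I would set $\Gamma(x) = \e^{\lambda x_1} G(x)$ for a constant $\lambda$ to be determined, and apply the operator $b\cdot\nabla - \nu\Delta$ to this ansatz; using $b = U e_1$, a short calculation yields
\begin{align*}
    b\cdot \nabla \Gamma - \nu \Delta \Gamma = \e^{\lambda x_1}\bigl((U\lambda - \nu\lambda^2) G + (U - 2\nu\lambda)\partial_{x_1} G - \nu \Delta G\bigr).
\end{align*}
The natural choice is $\lambda = \tfrac{U}{2\nu}$: this simultaneously annihilates the coefficient $U - 2\nu\lambda$ of the first-order derivative term and forces the zeroth-order coefficient to equal exactly $\nu\lambda^2$. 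Because $\e^{\lambda x_1}$ is smooth with value $1$ at the origin, conjugation by it preserves $\delta_0$, so the equation for $G$ collapses to the Yukawa equation $-\nu\Delta G + \nu\lambda^2 G = \delta_0$.

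Second, I would invoke the classical three-dimensional fundamental solution of $-\nu\Delta + \nu\lambda^2$, namely $G(x) = \frac{\e^{-\lambda r}}{4\pi \nu r}$ with $r = |x|$. This is straightforward to verify from the identity $\Delta u(r) = \tfrac{1}{r}(ru)''$ valid for radial $u$: away from the origin one computes $-\nu\Delta G + \nu\lambda^2 G = 0$, while the Newtonian singularity $G \sim \tfrac{1}{4\pi \nu r}$ produces $\delta_0$ under $-\nu\Delta$ in the distributional sense. Undoing the exponential substitution then yields
\begin{align*}
    \Gamma(x) = \e^{\lambda x_1} G(x) = \frac{1}{4\pi \nu r}\, \e^{-\lambda(r - x_1)},
\end{align*}
which matches the claimed formula. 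There is no substantive obstacle here; the only point requiring care is the distributional identity $\e^{\lambda x_1}\delta_0 = \delta_0$, which is valid because $\e^{\lambda x_1}$ is smooth and equal to $1$ at the support of $\delta_0$, so the exponential conjugation commutes with the delta source.
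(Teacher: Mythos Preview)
Your argument is correct and takes a genuinely different route from the paper. The paper verifies the given formula directly: it computes $\nabla\log\Gamma$ and $\Delta\log\Gamma$ to check that $b\cdot\nabla\Gamma-\nu\Delta\Gamma=0$ away from the origin, and then handles the singularity by testing against a smooth $\varphi$ on shrinking balls and showing that only the $\frac{1}{r}$ part of $\nabla\Gamma$ survives in the limit, producing $\varphi(0)$. Your approach instead \emph{derives} the formula: the exponential gauge $\Gamma=\e^{\lambda x_1}G$ with $\lambda=\tfrac{U}{2\nu}$ simultaneously kills the first-order drift and leaves the Yukawa operator $-\nu\Delta+\nu\lambda^2$, whose three-dimensional fundamental solution $\tfrac{\e^{-\lambda r}}{4\pi\nu r}$ is classical; the distributional step $\e^{\lambda x_1}\delta_0=\delta_0$ is handled correctly. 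Your method is shorter and more conceptual, since it explains the structure of the kernel as a conjugated screened Coulomb potential; the paper's verification is entirely self-contained and does not rely on prior knowledge of the Yukawa fundamental solution, and its explicit computation of $\nabla\Gamma$ is reused immediately afterward to obtain the pointwise bound $|\nabla\Gamma|\lesssim r^{-3/2}(r-x_1)^{-1/2}$ needed for the $\mathrm{L}^{3/2,\infty}_{x_1}\mathrm{L}^{3/2}_{x_2,x_3}$ estimate.
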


\begin{proof}
    
    To verify its validity, we first compute away from $r = 0$ the gradient $\grad \Gamma = \Gamma \grad \log \Gamma$ with
    \begin{align*}
        \grad \log \Gamma &= - \grad \log r - \lambda \grad (r - x _1) = -\frac{\hat x}{r} - \lambda (\hat x - e _1) = -\frac1r \hat x - \lambda \hat x + \lambda e _1.        
    \end{align*}
    Here we used the notation $\hat x = x / r$. Next we compute the Laplacian:
    \begin{align*}
        \La \Gamma &= \div \grad \Gamma = 
        \div (\Gamma \grad \log \Gamma) = \Gamma |\grad \log \Gamma| ^2 + \Gamma \La \log \Gamma,
    \end{align*}
    where 
    \begin{align*}
        \La \log \Gamma = \div \left(
            -\frac1r \hat x - \lambda \hat x
        \right) = - \left(\frac1r + \lambda\right) \frac2r + \frac1{r ^2} = -\left(\frac1r + \lambda\right) ^2 + \lambda ^2.
    \end{align*}
    Substituting this into $\La \Gamma$, we get
    \begin{align*}
        |\grad \log \Gamma| ^2 + \La \log \Gamma &= \left|
            \left(\frac1r + \lambda\right) \hat x - \lambda e _1
        \right| ^2 - \left(\frac1r + \lambda\right) ^2 + \lambda ^2 \\
        &= -2 \lambda e _1 \cdot \left(
            \left(\frac1r + \lambda\right) \hat x - \lambda e _1
        \right) = 2 \lambda e _1 \cdot \grad \log \Gamma.
    \end{align*}
    Hence, $\nu \La \Gamma = (2 \nu \lambda e _1 \cdot \grad \log \Gamma) \Gamma = U e _1 \cdot \grad \Gamma = b \cdot \grad \Gamma$.
    
    We have verified that $b \cdot \grad \Gamma - \nu \La \Gamma = 0$ in the open set $\R^3 \setminus \{0\}$. It remains to check the behavior near $r = 0$. Taking a smooth test function $\varphi$ supported in $B _r$, we compute 
    \begin{align*}
        \int _{B _r} (b \cdot \grad \Gamma - \nu \La \Gamma) \varphi \dx &= \int _{B _r} (-b \cdot \grad \varphi - \nu \La \varphi) \Gamma \dx \\
        & \qquad + \int _{\partial B _r} (\Gamma b - \varphi\grad \Gamma+\Gamma\nabla \varphi) \cdot  n\d \sigma .
    \end{align*}
    Fixing $\varphi$, the left-hand side is independent on the value of $r$, so we may let $r \to 0$. We notice that $\Gamma$ has the following trivial bound:
    \begin{align*}
        |\Gamma (x)| \leq \frac{1}{4\pi r}.
    \end{align*}
    All the integrals on the right-hand-side except the one with $\grad \Gamma$ will vanish as $r \to 0$. Moreover, 
    \begin{align*}
        \int _{\partial B _r} \grad \Gamma \cdot \varphi n \d \sigma = \int _{\partial B _r} \Gamma \grad \log \Gamma \cdot \varphi n \d \sigma = -\int _{\partial B _r} \Gamma \left(\frac1r \hat x + \lambda \hat x - \lambda e _1\right) \cdot \varphi n \d \sigma.
    \end{align*} 
    Only the term with $\frac1r$ will survive in the limit $r\to 0$. Note that $n = \hat x$, so we have 
    \begin{align*}
        \lim _{r \to 0} \int _{\partial B _r} (-\nu \grad \Gamma) \cdot \varphi n \d \sigma = \lim _{r \to 0} \int _{\partial B _r} \frac{\nu \Gamma}r \varphi \d \sigma =\varphi (0).
    \end{align*}
    This finishes the proof. 
\end{proof}

From the form of the fundamental solution, we see immediately that $|\Gamma| \le \frac1r$ and $\Gamma \in \mathrm{L}^{3, \infty}$.  We would like to establish an upper bound for $|\grad \Gamma|$ that is uniform in the size of $\lambda$. Note that 
\begin{align*}
    |r (\hat x - e _1)| ^2 = (x _1 - r) ^2 + x _2 ^2 + x _3 ^2 = (r - x _1) ^2 + r ^2 - x _1 ^2 = 2 r (r - x _1).
\end{align*}
Therefore 
\begin{align*}
    |\grad \log \Gamma| \le \frac1r + \lambda \sqrt{\frac{2 (r - x _1)}r} \le (\lambda (r - x _1) + 1) \sqrt{\frac{2}{r (r - x _1)}}.
\end{align*}
We used the inequality $0 \le r - x _1 \le 2 r$. It follows that
\begin{align*}
    |\grad \Gamma| \le \frac1{4 \pi \nu r} \e ^{-\lambda (r - x _1)} (\lambda (r - x _1) + 1) \sqrt{\frac{2}{r (r - x _1)}} \le C r ^{-\frac32} (r - x _1) ^{-\frac12}.
\end{align*}
We used that $\e ^{-\lambda (r - x _1)} (1 + \lambda (r - x _1))$ is bounded uniformly in $x _1$, $r$, and $\lambda$.

\begin{lemma}
    $\grad \Gamma$ is bounded in $\mathrm{L}^{\frac32, \infty} _{x _1} \mathrm{L}^\frac32 _{x _2, x _3}$ uniformly in $\lambda$.
\end{lemma}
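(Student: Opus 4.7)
The plan is to start from the pointwise bound
\begin{align*}
    |\grad \Gamma (x)| \le C r ^{-\frac32} (r - x _1) ^{-\frac12}
\end{align*}
that has already been derived, uniformly in $\lambda = U/(2\nu)$, in the paragraph preceding the lemma. With this bound in hand, the mixed norm reduces to a two-step computation: first evaluate the inner norm $\nor{\grad \Gamma} _{\mathrm{L}^{3/2} _{x _2, x _3}}$ at each fixed $x _1$, then control the resulting function of $x _1$ in the outer weak-$\mathrm{L}^{3/2}$ space.

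For the inner norm, I would pass to polar coordinates $\rho = (x _2 ^2 + x _3 ^2) ^{1/2}$ and substitute $r = (x _1 ^2 + \rho ^2) ^{1/2}$, which has Jacobian $\rho \d \rho = r \d r$ with $r$ ranging over $[|x _1|, +\infty)$. Using the pointwise bound gives
\begin{align*}
    \int _{\mathbb R ^2} |\grad \Gamma| ^{\frac32} \d x _2 \d x _3 \lesssim \int _{|x _1|} ^{\infty} r ^{-\frac54} (r - x _1) ^{-\frac34} \d r.
\end{align*}
A scaling substitution $r = |x _1| (1 + s)$, split into the cases $x _1 > 0$ (where $r - x _1 = |x _1| s$) and $x _1 < 0$ (where $r - x _1 = |x _1| (2 + s)$), factors out $|x _1| ^{-1}$ and leaves a dimensionless integral in $s$. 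The remaining $s$-integral is finite: at $s = 0$ the integrand is at worst $s ^{-3/4}$ (only in the case $x _1 > 0$), and at $s \to \infty$ it decays like $s ^{-2}$. Taking the $2/3$-th power yields $\nor{\grad \Gamma} _{\mathrm{L}^{3/2} _{x _2, x _3}} \lesssim |x _1| ^{-2/3}$, with a constant independent of $\lambda$.

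For the outer estimate, I would simply observe that the function $x _1 \mapsto |x _1| ^{-2/3}$ lies in $\mathrm{L}^{3/2, \infty} (\mathbb R)$, since its distribution function is $|\{x _1 \in \mathbb R : |x _1| ^{-2/3} > \alpha\}| = 2 \alpha ^{-3/2}$, making $\alpha |\{\cdot > \alpha\}| ^{2/3}$ bounded uniformly in $\alpha$. Combined with the previous step, this gives the claimed uniform bound on $\grad \Gamma$.

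The main obstacle, such as it is, lies in the case analysis for the scaling substitution. In particular, for $x _1 > 0$ one must verify that the apparent singularity of $(r - x _1) ^{-3/4}$ at $r = |x _1|$, equivalently of $(r - x _1) ^{-1/2}$ at $\rho = 0$ in the original variables, is actually integrable after the factor $\rho$ from the polar Jacobian and the raising to the power $3/2$ are accounted for. This is a routine check, after which the proof is a direct computation from the explicit form of the fundamental solution.
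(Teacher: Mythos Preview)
Your proposal is correct and follows essentially the same route as the paper: both start from the pointwise bound $|\grad\Gamma|\le C r^{-3/2}(r-x_1)^{-1/2}$, integrate the $\tfrac32$-power in polar coordinates over $(x_2,x_3)$ to obtain a quantity of order $|x_1|^{-1}$, and then observe that $|x_1|^{-2/3}\in \mathrm{L}^{3/2,\infty}_{x_1}$. The only difference is cosmetic: the paper states the bound $\int_{\mathbb R^2}[\,\cdot\,]^{3/2}\dx_2\dx_3\le 4/|x_1|$ directly, while you justify it via the change of variable $\rho\d\rho=r\d r$ and the scaling $r=|x_1|(1+s)$ with a case split on the sign of $x_1$.
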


\begin{proof}
    By direct integration:
    \begin{align*}
        \int _{\R ^2} &\left[r ^{-\frac32} (r - x _1) ^{-\frac12}\right] ^\frac32 \dx _2 \dx _3 \\ &= 2 \pi \int _0 ^\infty (\rho ^2 + x _1 ^2) ^{-\frac98} \left[(\rho ^2 + x _1 ^2) ^\frac12 - x _1 \right] ^{-\frac34} \rho \d \rho \leq \frac{4}{|x_1|}.
    \end{align*}
    Since $\frac1{x _1} \in \mathrm{L}^{1, \infty} _{x _1}$, we conclude that $\grad \Gamma \in \mathrm{L}^{\frac32, \infty} _{x _1} \mathrm{L}^\frac32 _{x _2, x _3}$.
\end{proof}

\subsection{Local \texorpdfstring{$\mathrm{L}^p$}{L p} Estimates}

We have found $\Gamma \in \mathrm{L}^{3, \infty}$ and $\nabla \Gamma \in \mathrm{L}^{\frac32, \infty} _{x _1} \mathrm{L}^\frac32 _{x _2, x _3}$, with bounds uniform in the size of the drift. Therefore, if $f \in \mathrm{L}^p (\R ^3)$, $1 < p < \frac32$, it holds that
\begin{align*}
    \nor{\Gamma * f} _{\mathrm{L}^r} \lesssim \nor{f} _{\mathrm{L}^p}
\end{align*}
with $\frac1r = \frac1p - \frac23$. If $g \in \mathrm{L}^q (\R ^3)$, $1 < q < 3$, then 
\begin{align*}
    \nor{\Gamma * \div g} _{\mathrm{L}^r} \lesssim \nor{g} _{\mathrm{L}^q}
\end{align*}
with $\frac1r = \frac1q - \frac13$. A proof of these convolution inequalities is given in \cite{ON}.

Recall $b = U e _1$. Suppose $\theta$ solves Equation \eqref{eqn: Oseen} in a \textit{capsule} $\mathcal C$ of radius $R$, length $L \ge R$.
We want to derive a local estimate on $\theta$.
\begin{lemma}
    \label{lem: local-estimate}
    If $\theta \in \mathrm{L}^q (\mathcal C)$ solves Equation \eqref{eqn: Oseen} in $\mathcal C$ with $f, g \in \mathrm{L}^q (\mathcal C)$ for some $1 < q < 3$, then
    \begin{align*}
        \nor{\theta} _{\mathrm{L}^r (\frac12 \mathcal C)} \lesssim 
        R \nor f _{\mathrm{L}^q (\mathcal C)} + \nor{g} _{\mathrm{L}^q (\mathcal C)} + \left(\frac {UR}L + \frac1R \right) \nor {\theta} _{\mathrm{L}^q (\mathcal C)} := \operatorname{RHS},
    \end{align*}
    where $\frac1r = \frac1q - \frac13$.
\end{lemma}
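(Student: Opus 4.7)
The strategy is to cut $\theta$ off to produce a compactly supported function on $\mathbb R^3$ and then exploit the integral representation via the fundamental solution $\Gamma$ together with the Young--O'Neil convolution inequalities recorded immediately before the lemma. Concretely, I would pick a cutoff $\phi \in C_c^\infty(\mathcal C)$ with $\phi \equiv 1$ on $\frac12 \mathcal C$ and the anisotropic derivative bounds $|\partial_{x_1}\phi| \lesssim 1/L$, $|\nabla\phi| \lesssim 1/R$, $|\nabla^2\phi| \lesssim 1/R^2$ exactly as in \eqref{eqn: grad-phi}. A direct product-rule computation, together with the identity $-2\nabla\phi \cdot \nabla\theta = -2\operatorname{div}(\theta\nabla\phi) + 2\theta\Delta\phi$ used to eliminate derivatives of $\theta$, shows that $\tilde\theta := \phi\theta$ solves on all of $\mathbb R^3$ the equation $b \cdot \nabla\tilde\theta - \Delta\tilde\theta = F + \operatorname{div} G$, where $F := \phi f + \theta\,b\cdot\nabla\phi + \theta \Delta\phi - g\cdot\nabla\phi$ and $G := \phi g - 2\theta\nabla\phi$ are both supported in $\mathcal C$.

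Since $\tilde\theta$ is compactly supported, the representation $\tilde\theta = \Gamma * F + \Gamma * \operatorname{div} G$ holds globally. The convolution inequalities stated just before the lemma (stemming from $\Gamma \in \mathrm L^{3,\infty}$ and $\nabla \Gamma \in \mathrm L^{3/2,\infty}_{x_1} \mathrm L^{3/2}_{x_2,x_3}$) then give, with $1/p = 1/q + 1/3$ and $1/r = 1/q - 1/3$,
\begin{align*}
    \nor{\theta}_{\mathrm L^r(\frac12\mathcal C)} \le \nor{\tilde\theta}_{\mathrm L^r(\mathbb R^3)} \lesssim \nor{F}_{\mathrm L^p(\mathcal C)} + \nor{G}_{\mathrm L^q(\mathcal C)}.
\end{align*}
The $G$-piece is immediate: $\nor{G}_{\mathrm L^q} \le \nor{g}_{\mathrm L^q} + (C/R)\nor{\theta}_{\mathrm L^q}$. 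For $F$, the lower-order pieces obey $\nor{\theta\, b\cdot\nabla\phi}_{\mathrm L^q} \lesssim (U/L)\nor{\theta}_{\mathrm L^q}$, $\nor{\theta \Delta\phi}_{\mathrm L^q} \lesssim R^{-2}\nor{\theta}_{\mathrm L^q}$, and $\nor{g\cdot\nabla\phi}_{\mathrm L^q} \lesssim R^{-1}\nor{g}_{\mathrm L^q}$, while the main term satisfies $\nor{\phi f}_{\mathrm L^q} \le \nor{f}_{\mathrm L^q}$.

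The main obstacle is recovering the sharp anisotropic coefficients $R\nor{f}_{\mathrm L^q}$ and $(UR/L)\nor{\theta}_{\mathrm L^q}$ in the statement. A direct Hölder inequality on $\operatorname{supp} F \subset \mathcal C$ produces the geometric prefactor $|\mathcal C|^{1/p - 1/q} = |\mathcal C|^{1/3} \sim (L R^2)^{1/3}$, which is scale-invariant only in the isotropic case $L = R$ and is strictly lossy whenever $L \gg R$. To recover the stated coefficients (which a rescaling of the equation $\tilde\theta(y) = \theta(Ry)$ confirms are the natural ones), I would tile $\mathcal C$ by a chain of balls of radius $R$ centered on the capsule axis and, on each ball, use a Bogovski anti-divergence to rewrite the non-$\operatorname{div}$ pieces $\phi f$ and $\theta\,b\cdot\nabla\phi$ as divergences of vector fields of $\mathrm L^q$-size $\lesssim R\nor{f}_{\mathrm L^q}$ and $\lesssim (UR/L)\nor{\theta}_{\mathrm L^q}$ respectively; folding these into $G$ and re-applying the $\operatorname{div}$-form convolution inequality then produces exactly the claimed right-hand side, while the remaining $\theta \Delta\phi$ and $g \cdot \nabla\phi$ contributions already fit into the $1/R$-coefficient slot.
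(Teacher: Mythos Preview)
Your setup—the cutoff $\phi$, the commutator computation producing $F + \operatorname{div} G$, and the treatment of the $G$-piece via the $\nabla\Gamma \in \mathrm L^{3/2,\infty}_{x_1}\mathrm L^{3/2}_{x_2,x_3}$ convolution bound—matches the paper exactly, and you correctly identify that a naive H\"older step on $F$ loses the anisotropic factor $(L/R)^{1/3}$.

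The gap is in your proposed Bogovski repair. Writing $\phi f = \operatorname{div} W$ with $W$ compactly supported requires $\int \phi f = 0$, which is false in general; if you instead tile by balls $B_j$ and solve $\operatorname{div} W_j = \chi_j F - m_j\eta_j$ ball-by-ball, the leftover bump sum $\sum_j m_j \eta_j$ has the same $\mathrm L^q$ size as $F$ and you have made no progress. Transporting the means $m_j$ along the chain by telescoping is precisely the Bogovski construction on a chain domain, and its operator norm degenerates with $L/R$: already in the one-dimensional model, an antiderivative of a mean-zero source of size $1$ supported at the two ends of $[0,L]$ is of size $\sim L$ on the bulk, not $\sim R$. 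So ball-by-ball Bogovski cannot deliver the sharp coefficient $R$ uniformly in $L/R$.

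The paper avoids all of this with a direct, localized Young inequality. Since $F$ is supported in $\mathcal C$ and one only needs $\Gamma * F$ on $\mathcal C$, the convolution sees $\Gamma$ only on $3\mathcal C$; replacing $\Gamma$ by $\phi_1 \Gamma$ for a cutoff $\phi_1$ on $3\mathcal C$, the key anisotropic computation is
\[
\|\Gamma\|_{\mathrm L^{3/2}(3\mathcal C)}^{3/2} \lesssim \int_{3\mathcal C}\frac{\d x}{|x|^{3/2}} \le \int_{|x_2|,|x_3|\lesssim R}\int_{\mathbb R}\frac{\d x_1\,\d x_2\,\d x_3}{(x_1^2+x_2^2+x_3^2)^{3/4}} \lesssim R^{3/2},
\]
uniformly in $L$. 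Ordinary Young's inequality then gives $\|\Gamma*F\|_{\mathrm L^r(\mathcal C)} \le \|\Gamma\|_{\mathrm L^{3/2}(3\mathcal C)}\|F\|_{\mathrm L^q} \lesssim R\,\|F\|_{\mathrm L^q}$, and expanding $\|F\|_{\mathrm L^q}$ with your own bounds on $\theta\, b\cdot\nabla\phi$, $\theta\Delta\phi$, $g\cdot\nabla\phi$ yields exactly the coefficients $R\|f\|_{\mathrm L^q}$ and $(UR/L + 1/R)\|\theta\|_{\mathrm L^q}$ in the statement. This single kernel estimate replaces the entire Bogovski apparatus.
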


\begin{proof}
    
    Fix a non-negative cut-off function $\phi \in C _c ^\infty (\mathcal C)$ with $\phi \equiv 1$ inside $\frac12 \mathcal C$, satisfying bounds as in Equation \eqref{eqn: grad-phi}.
    Then 
    \begin{align*}
        b \cdot \grad (\phi \theta) - \La (\phi \theta) = \phi f + \div (\phi g) + \theta b \cdot \grad \phi - 2 \div (\theta \grad \phi) + \theta \La \phi - g \cdot \grad \phi.
    \end{align*}
    Denote 
    \begin{align*}
        \tilde f &= \phi f + \theta b \cdot \grad \phi + \theta \La \phi - g \cdot \grad \phi, &
        \tilde g &= \phi g - 2 \theta \grad \phi.
    \end{align*}
    Then $\phi \theta$ solves 
    \begin{align*}
        b \cdot \grad (\phi \theta) - \La (\phi \theta) = \tilde f + \div \tilde g.
    \end{align*}
The function $\phi\theta$ is the unique compactly supported solution, so $\phi \theta = \Gamma * (\tilde f + \div \tilde g)$. First, for the forcing term in divergence form, we note that 
    \begin{align*}
        \nor{\tilde g} _{\mathrm{L}^q (\mathcal C)} \lesssim \nor{g} _{\mathrm{L}^q (\mathcal C)} + \frac1R \nor{\theta} _{\mathrm{L}^q (\mathcal C)} \le \text{RHS}.
    \end{align*}
    Therefore, 
    \begin{align*}
        \nor{\Gamma * \div \tilde g} _{\mathrm{L}^r (\mathcal C)} \lesssim \nor{\tilde g} _{\mathrm{L}^q (\mathcal C)} \lesssim \text{RHS}.
    \end{align*}
    For the forcing term not in divergence form, we notice that 
    \begin{align*}
        \|{\tilde f}\| _{\mathrm{L}^q (\R ^3)} &\lesssim \nor f _{\mathrm{L}^q (\mathcal C)} + \frac1R \nor{g} _{\mathrm{L}^q (\mathcal C)} + \left(\frac UL + \frac1{R ^2}\right) \nor {\theta} _{\mathrm{L}^q (\mathcal C)}.
    \end{align*}
    Here we used that $|b \cdot \grad \phi| \lesssim \frac UL$. %
    Moreover, we have the elementary bound:
    \begin{align*}
        \nor{\Gamma} _{\mathrm{L}^\frac32 (3 \mathcal C)} ^\frac32 \lesssim \int _{3 \mathcal C} \frac1{r ^\frac32} \d x \le C R ^\frac32,
    \end{align*}
    where the constant $C$ does not depend on $L$. Suppose $\tilde{\Gamma}= \phi_1 \Gamma$, where $\phi_1$ is a smooth function supported on $2\mathcal{C}$ and vanishing off $3\mathcal{C}$. Since $\tilde f$ is supported in $\mathcal C$, we have that $(\tilde{\Gamma}\ast \tilde{f})(x) = (\Gamma\ast\tilde{f})(x)$ for all $x\in \mathcal{C}$. In addition, we have by Young's convolution inequality:
    \begin{align*}
        \nor{\Gamma * \tilde f} _{\mathrm{L}^r (\mathcal C)}= \nor{\tilde{\Gamma} * \tilde f} _{\mathrm{L}^r (\mathcal C)} &\le \nor{\tilde{\Gamma} * \tilde f} _{\mathrm{L}^r (\R^3)}\leq \nor{\Gamma} _{\mathrm{L}^\frac32 (3 \mathcal C)} \|\tilde f\| _{\mathrm{L}^q (\mathcal C)} \\
        &\lesssim R \| f \| _{\mathrm{L}^q (\mathcal C)} + \nor{g} _{\mathrm{L}^q (\mathcal C)} + \left(\frac {UR}L + \frac1R \right) \nor {\theta} _{\mathrm{L}^q (\mathcal C)}.
    \end{align*}
    Since $\phi \equiv 1$ in $\frac12 \mathcal C$, we have completed the proof.
\end{proof}

\end{appendix}

\end{document}